\newcommand{\Z}{\mathbf{Z}}
\newcommand{\Q}{\mathbf{Q}}
\newcommand{\C}{\mathbf{C}}
\newcommand{\h}{\mathfrak{H}}
\newcommand{\Gm}{\mathbf{G}_m}
\newcommand{\eps}{\varepsilon}
\newcommand{\epsb}{{\overline{\varepsilon}}}
\newcommand{\chib}{{\overline{\chi}}}
\DeclareMathOperator{\dv}{div}
\DeclareMathOperator{\Gal}{Gal}
\DeclareMathOperator{\Hom}{Hom}
\DeclareMathOperator{\imag}{Im}
\DeclareMathOperator{\real}{Re}
\DeclareMathOperator{\SL}{SL}
\providecommand{\abs}[1]{|#1|}
\newtheorem{thm}{Theorem}
\newtheorem{lem}[thm]{Lemma}
\newtheorem{pro}[thm]{Proposition}
\theoremstyle{definition}
\newtheorem{definition}[thm]{Definition}
\theoremstyle{remark}
\newtheorem{remark}[thm]{Remark}
\newtheorem{question}[thm]{Question}
\author{François Brunault}
\email{francois.brunault@ens-lyon.fr}
\address{ÉNS Lyon, UMPA, 46 allée d'Italie, 69007 Lyon, France}
\begin{document}

\renewcommand{\refname}{References}

\mainmatter

\begin{center}
\Large
\textbf{On the Mahler measure associated to $X_1(13)$}
\normalsize
\end{center}

\vspace{.5cm}

\emph{Abstract.}
We show that the Mahler measure of a defining equation of the modular curve $X_1(13)$ is equal to the derivative at $s=0$ of the $L$-function of a cusp form of weight 2 and level 13 with integral Fourier coefficients. The proof combines Deninger's method, an explicit version of Beilinson's theorem together with an idea of Merel to express the regulator integral as a linear combination of periods. Finally, we present further examples related to the modular curves of level 16, 18 and 25.

\vspace{.5cm}

The Mahler measure of a polynomial $P \in \C[x_1^{\pm 1},\ldots,x_n^{\pm 1}]$ is defined by
\begin{equation*}
m(P)=\frac{1}{(2\pi i)^n} \int_{|z_1|=1} \cdots \int_{|z_n|=1} \log |P(z_1,\ldots,z_n)| \frac{dz_1}{z_1} \cdots \frac{dz_n}{z_n}.
\end{equation*}

In a fascinating paper, Boyd \cite{boyd:expmath} developed a body of conjectures relating Mahler measures of $2$-variable polynomials and special values of $L$-functions of elliptic curves. Deninger \cite{deninger:mahler} provided a bridge between the world of Mahler measures and certain $K$-theoretic regulators. He thus showed the relevance of Beilinson's conjectures to prove relations between Mahler measures and special values of $L$-functions. In the case of curves, such identities have been proven rigorously only in rare instances, mainly in the case of genus $0$ and $1$ (\cite{brunault:cras}, \cite{rogers-zudilin}, \cite{rogers-zudilin2}...). There has been some recent work, however, in the case of genus 2 (see \cite{bertin-zudilin} and the references therein).

The aim of this paper is to achieve such a relation in the case of a curve of genus $2$. We work with the modular curve $X_1(13)$. Thanks to \cite[p. 56]{lecacheux:13}, a defining equation of $X_1(13)$ is
\begin{equation*}
P=y^2 x(x-1)+y(-x^3+x^2+2x-1)-x^2+x.
\end{equation*}

We prove the following theorem.

\begin{thm}\label{main thm}
We have the identity $m(P)=2L'(f,0)$, where $f$ is the cusp form of weight $2$ and level $13$ whose Fourier expansion begins with
\begin{equation*}
f = 2q-3q^2-2q^3+q^4+6q^6-q^9-3q^{10}-4q^{12}-5q^{13}+O(q^{15}).
\end{equation*}
\end{thm}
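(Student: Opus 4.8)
The plan is to follow the three-stage strategy announced in the abstract: use Deninger's method to convert the Mahler measure into a regulator integral, apply an explicit form of Beilinson's theorem to relate that integral to an $L$-value, and use Merel's device to express everything as a linear combination of periods of $f$ and so pin down the exact constant.

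First I would carry out Deninger's reduction. Regarding $P$ as a quadratic polynomial in $y$ with coefficients in $\C[x^{\pm1}]$ and applying Jensen's formula in $y$, the double integral collapses onto the curve $C=\{P=0\}$. The leading coefficient in $y$ is $x(x-1)$, whose Mahler measure vanishes, so no boundary term survives and one is left with
\[
m(P)=-\frac{1}{2\pi}\int_{\gamma}\eta(x,y),
\]
where $\eta(x,y)=\log|x|\,d\arg y-\log|y|\,d\arg x$ is the regulator $1$-form of the symbol $\{x,y\}$ and $\gamma\subset C(\C)$ is the Deninger path cut out by $|x|=1$, $|y|\ge 1$. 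I would check that $\gamma$ closes up into a cycle (or decomposes into arcs joining cusps), so that the integral depends only on the class of $\{x,y\}$ in $K_2$ and on the homology class of $\gamma$.

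Next comes the modular interpretation. Using the birational identification of $C$ with $X_1(13)$, I would write $x$ and $y$ as modular units---functions with divisor supported on the cusps---and compute their tame symbols to confirm that $\{x,y\}$ gives an integral class in $H^2_{\mathcal M}(X_1(13),\Q(2))$. The explicit version of Beilinson's theorem for modular curves then evaluates the regulator of such a cuspidal symbol: pairing it against the class of $f$ yields, up to an explicit factor, the value $L(f,2)$. Since the space of weight-$2$ cusp forms of level $13$ is two-dimensional, I would isolate the $f$-component of $r(\{x,y\})$ and check that the Eisenstein part of the regulator makes no contribution along $\gamma$.

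The decisive and most delicate step is Merel's reduction to periods. Breaking $\gamma$ into arcs with cusp endpoints and using that the logarithmic derivatives of the modular units $x,y$ are Eisenstein series, I would rewrite $\int_\gamma\eta(x,y)$---via integration by parts and the known Fourier expansions---as an explicit $\Q$-linear combination of modular symbols $\int_a^b f(z)\,dz$, that is, of periods of $f$. The main obstacle is to control this combination \emph{exactly}: one must determine which cusps and which periods occur and show that the accumulated coefficients, together with Beilinson's normalisation, collapse to $L(f,2)$ with no lost rational factor. The functional equation $\Lambda(f,s)=\varepsilon\,\Lambda(f,2-s)$ for $\Lambda(f,s)=13^{s/2}(2\pi)^{-s}\Gamma(s)L(f,s)$ gives $L'(f,0)=\varepsilon\,\frac{13}{4\pi^2}L(f,2)$; feeding this into the period expression should produce exactly $m(P)=2L'(f,0)$. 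I expect the precise constant, and in particular the factor $2$ tied to the normalisation $a_1(f)=2$, to emerge only after this full bookkeeping, and I would confirm it by a high-precision numerical comparison as an independent safeguard.
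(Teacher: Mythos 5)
Your outline reproduces the strategy announced in the abstract, but it omits the two ingredients that actually carry the proof, and at the decisive step it describes a mechanism that is not the one that works. First, nothing in your proposal identifies the homology class of the Deninger cycle. For this particular $P$ the polynomial does not vanish on the torus, so $\gamma_P$ is a closed cycle in $H_1(X_1(13)(\C),\Z)$ with no cusp endpoints; before any $L$-value machinery can say anything about $\int_{\gamma_P}\eta$, you must know which class $\gamma_P$ is. The paper devotes a whole section to this: it proves that the period map $\iota:\gamma\mapsto\langle\gamma,f_\eps\rangle$ is injective on $\mathcal{H}^-$ with image a hexagonal lattice, so that a finite-precision numerical computation of the periods of $\gamma_P$ rigorously pins down $\gamma_P=\gamma_3$ (and likewise $W_{13}\gamma_P=\gamma_4-\gamma_3$). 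Your plan to ``break $\gamma$ into arcs with cusp endpoints'' silently presupposes exactly this unaddressed identification.

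Second, you conflate the one-dimensional integral $\int_{\gamma_P}\eta(x,y)$ (which is the Mahler measure) with the two-dimensional regulator pairing $\int_{X_1(13)(\C)}\eta(u,v)\wedge\omega_f$ (which is what the explicit Beilinson theorem evaluates, and it evaluates it as the product $L(f_\eps,2)\,L(f_\eps,\eps^3,1)$, not as $L(f,2)$ alone). Bridging these two is the real content of Merel's formula: it converts the surface integral into $\int_{\gamma_{f_\eps}}\eta(u,v)$ over an explicit cycle whose coefficients are Manin symbols (periods) of $f_\eps$; one then shows that $\gamma_{f_\eps}^-$ is proportional to the explicit class $\gamma_\epsb^-$, the proportionality constant being the same period $\langle\gamma_\eps^+,f_\eps\rangle$ that computes $L(f_\eps,\eps^3,1)$, so that this transcendental factor cancels on both sides. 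Your version of ``Merel's reduction''---integrating $\eta(x,y)$ by parts against Fourier expansions to obtain periods of $f$ directly---is not Merel's formula but a sketch of the Rogers--Zudilin method, which the paper explicitly notes as an alternative whose details were not worked out, and which would in any case still require the homology identification above. You also omit the Atkin--Lehner twist (one must pass to $x=W_{13}(h)$, $y=W_{13}(H)$, because $h$ and $H$ are supported on the cusps above $0$ while Beilinson's units $u_\chi$ are supported above $\infty$, and correspondingly integrate over $W_{13}\gamma_P$), the divisor decomposition matching $\eta(x,y)$ with $\eta(u_{\eps^3},u_{\epsb^2})$, and the fact that $f=f_\eps+f_\epsb$ is not a $W_{13}$-eigenform: the functional equation relates $L(f_\eps,2)$ to $L'(f_\epsb,0)$ rather than having the single-sign form you wrote, and the final identity $m(P)=2L'(f,0)$ emerges only after summing the $\eps$- and $\epsb$-isotypical contributions of $W_{13}\gamma_P$.
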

Note that the cusp form $f$ is not a newform; rather, it is the trace of the unique (up to Galois conjugacy) newform of weight $2$ on the group $\Gamma_1(13)$.

In the last section, we present further examples of relations between Mahler measures and $L$-values in the case of the modular curves $X_1(16)$, $X_1(18)$ and $X_1(25)$.

This article grew out of results in my PhD thesis (see especially \cite[\S 3.8 and Remarque 112]{brunault:these}). I would like to thank Odile Lecacheux for helpful exchanges having led to the discovery of these identities. I would also like to thank Wadim Zudilin for useful comments.

\section{Deninger's method}

In this section we express the Mahler measure of $P$ in terms of the integral of a differential $1$-form on the modular curve $X_1(13)$, following Deninger's method \cite{deninger:mahler}.

We view $P$ as a polynomial in $h$:
\begin{equation*}
P(H,h) = -H + (-H^2+2H+1)h+(H^2+H-1)h^2-Hh^3.
\end{equation*}
Note that the constant term of $P$ is given by $P^*(H)=-H$.

Let $Z \subset \Gm^2$ be the curve defined by the equation $P=0$. Then $Z$ identifies with an affine open subscheme of $X_1(13)$ by \cite[p. 56]{lecacheux:13}. In particular $Z$ is smooth.

Looking at the resultant of the polynomials $P(H,h)$ and $H^2 h^3 P(\frac{1}{H},\frac{1}{h})$ with respect to $h$, it can be checked that $P$ doesn't vanish on the torus $T^2 = \{(H,h) \in \C : |H|=|h|=1\}$. Moreover, we check numerically that for each $H \in T$, there exists a unique $h(H) \in \C$ such that $P(H,h(H))=0$ and $0<|h(H)|<1$. The map $H \in T \mapsto h(H)$ defines a closed cycle $\gamma_P$ in $H_1(Z(\C),\Z)$. We call $\gamma_P$ the \emph{Deninger cycle} associated to $P$. We give $\gamma_P$ the canonical orientation coming from $T$.

Since $P^*$ doesn't vanish on $T$, the polynomial $P$ satisfies the assumptions \cite[3.2]{deninger:mahler}, so that the discussion in \emph{loc. cit.} applies. Consider the differential form $\eta = \log |h| \frac{dH}{H}$ on $Z(\C)$. Using Jensen's formula, and noting that $m(P^*)=0$, we have \cite[(23)]{deninger:mahler}
\begin{equation*}
m(P) = -\frac{1}{2\pi i} \int_{\gamma_P} \eta.
\end{equation*}
Now we may express this as an integral of a closed differential form. By \cite[Prop. 3.3]{deninger:mahler}, we get
\begin{equation*}
m(P) = -\frac{1}{2\pi i} \int_{\gamma_P} \log |H| \cdot (\partial-\overline{\partial}) \log |h| - \log |h| \cdot (\partial-\overline{\partial}) \log |H|.
\end{equation*}

We now introduce a standard notation.

\begin{definition}
For any two meromorphic functions $u,v$ on a Riemann surface, define
\begin{equation*}
\eta(u,v) : = \log |u| \operatorname{darg}(v) - \log |v| \operatorname{darg}(u).
\end{equation*}
\end{definition}

The $1$-form $\eta(u,v)$ is well-defined outside the set of zeros and poles of $u$ and $v$. It is closed, so we may integrate it over cycles. Moreover, we have $\operatorname{darg}(u) = -i (\partial-\overline{\partial}) \log |u|$. Thus we have proved the following proposition.


\begin{pro}\label{pro deninger}
We have $m(P)=\frac{1}{2\pi} \int_{\gamma_P} \eta(h,H)$.
\end{pro}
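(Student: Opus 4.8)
The plan is purely computational: all the substantive input — Jensen's formula, the reduction of $m(P)$ to an integral over the Deninger cycle $\gamma_P$, and the passage to a closed form via \cite[Prop. 3.3]{deninger:mahler} — has already been assembled in the preceding discussion. What remains is to recognize that the integrand obtained there is, up to a constant, exactly the form $\eta(h,H)$, so the proof is a matter of bookkeeping of the $\partial-\overline{\partial}$ operators and of the scalar factors.

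First I would feed the identity $\operatorname{darg}(u) = -i(\partial-\overline{\partial})\log|u|$ into the definition $\eta(h,H) = \log|h|\,\operatorname{darg}(H) - \log|H|\,\operatorname{darg}(h)$. This yields
\begin{align*}
\eta(h,H) &= -i\bigl(\log|h|\,(\partial-\overline{\partial})\log|H| - \log|H|\,(\partial-\overline{\partial})\log|h|\bigr) \\
&= i\bigl(\log|H|\,(\partial-\overline{\partial})\log|h| - \log|h|\,(\partial-\overline{\partial})\log|H|\bigr).
\end{align*}
Thus the integrand $\Omega := \log|H|\,(\partial-\overline{\partial})\log|h| - \log|h|\,(\partial-\overline{\partial})\log|H|$ appearing in the formula derived above satisfies $\Omega = -i\,\eta(h,H)$. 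Substituting this into
\begin{equation*}
m(P) = -\frac{1}{2\pi i}\int_{\gamma_P}\Omega
\end{equation*}
and collecting constants via $-\frac{1}{2\pi i}\cdot(-i) = \frac{1}{2\pi}$ gives $m(P) = \frac{1}{2\pi}\int_{\gamma_P}\eta(h,H)$, which is the claim.

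The only real hazard is sign bookkeeping: keeping straight the order of the arguments in $\eta(h,H)$ versus $\eta(H,h)$ (which differ by a sign), and correctly combining the factor $-i$ coming from the $\operatorname{darg}$ identity with the prefactor $-\frac{1}{2\pi i}$. No further analytic input is required, since $\eta(u,v)$ is closed away from the zeros and poles of $u$ and $v$, and along $\gamma_P$ one has $|H|=1$ and $0<|h|<1$, so both $h$ and $H$ are finite and nonvanishing and the integral over the cycle $\gamma_P$ is well-defined.
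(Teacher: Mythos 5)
Your proposal is correct and is essentially the paper's own argument: the paper likewise takes Deninger's formula $m(P) = -\frac{1}{2\pi i}\int_{\gamma_P}\bigl(\log|H|\,(\partial-\overline{\partial})\log|h| - \log|h|\,(\partial-\overline{\partial})\log|H|\bigr)$ and converts it using $\operatorname{darg}(u) = -i(\partial-\overline{\partial})\log|u|$, so the proposition follows by the same bookkeeping of the factor $-i$ against the prefactor $-\frac{1}{2\pi i}$. Your sign computations ($\Omega = -i\,\eta(h,H)$ and $-\frac{1}{2\pi i}\cdot(-i)=\frac{1}{2\pi}$) check out.
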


\begin{lem}
Let $c$ denote complex conjugation on $Z(\C)$. We have $c_* \gamma_P = -\gamma_P$.
\end{lem}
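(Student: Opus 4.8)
The plan is to show that complex conjugation reverses the Deninger cycle $\gamma_P$. The key observation is that $\gamma_P$ was constructed as the image of the torus parameter circle $T = \{H \in \C : |H|=1\}$ under the section $H \mapsto (H, h(H))$, where $h(H)$ is the unique root of $P(H,\cdot)$ inside the unit disc. Since $P$ has real (indeed integral) coefficients, complex conjugation acts naturally on the solution set $Z(\C)$, and I would exploit the interplay between this action and the parametrization by $H \in T$.

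First I would pin down how $c$ interacts with the section $h(H)$. Because $P$ has real coefficients, for any $(H,h) \in Z(\C)$ we also have $(\overline{H},\overline{h}) \in Z(\C)$, i.e. $c(H,h)=(\overline{H},\overline{h})$. Now if $H \in T$, then $\overline{H} = 1/H \in T$ as well. Applying conjugation to the defining relation $P(H,h(H))=0$ gives $P(\overline{H},\overline{h(H)})=0$, and since $|\overline{h(H)}| = |h(H)| \in (0,1)$, the uniqueness of the root inside the unit disc forces $\overline{h(H)} = h(\overline{H}) = h(1/H)$. Thus $c$ sends the point of $\gamma_P$ above $H$ to the point of $\gamma_P$ above $\overline{H}=1/H$. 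In other words, $c$ maps $\gamma_P$ to itself as a set, acting on the base circle $T$ by $H \mapsto \overline{H}$.

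Next I would compute the effect on orientation. The cycle $\gamma_P$ carries the canonical orientation coming from $T$, which I may take to be parametrized by $H = e^{i\theta}$, $\theta \in [0,2\pi]$, traversed in the direction of increasing $\theta$. The conjugation map on the base is $e^{i\theta} \mapsto e^{-i\theta}$, which reverses the orientation of the circle. Since $c$ restricts to an orientation-reversing homeomorphism of the loop $\gamma_P$ onto itself (the fibre coordinate $h$ being determined by $H$), it follows at the level of homology that $c_* \gamma_P = -\gamma_P$.

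The main obstacle, and the step deserving the most care, is the uniqueness argument identifying $\overline{h(H)}$ with $h(1/H)$: this relies on the fact established earlier that for each $H \in T$ there is exactly one root of $P(H,\cdot)$ in the open unit disc, so that the two candidate roots $\overline{h(H)}$ and $h(\overline{H})$, both lying strictly inside the disc and both killing $P(\overline{H},\cdot)$, must coincide. One should also confirm that no root lies on the unit circle itself (guaranteed by the earlier verification that $P$ does not vanish on $T^2$), so the section $h(H)$ varies continuously and the conjugation symmetry is a genuine homeomorphism of $\gamma_P$ rather than something that could jump between branches. Once these points are secured, the orientation reversal is immediate and the identity $c_* \gamma_P = -\gamma_P$ follows.
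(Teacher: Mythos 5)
Your proof is correct and follows exactly the paper's approach: the key identity $h(\overline{H})=\overline{h(H)}$ (justified by the real coefficients of $P$ and the uniqueness of the root in the open unit disc) together with the orientation reversal of the base circle is precisely the paper's argument, which it states in one line. You have simply supplied the details the paper leaves implicit.
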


\begin{proof}[Proof]
For every $H \in T$, we have $h(\overline{H})=\overline{h(H)}$. It follows that $c_* \gamma_P=-\gamma_P$.
\end{proof}

\section{Determining Deninger's cycle}

In this section, we determine $\gamma_P$ explicitly in terms of modular symbols.

The space $S_2(\Gamma_1(13))$ of cusp forms of weight $2$ and level $13$ has dimension $2$ over $\C$. Let $\eps : (\Z/13\Z)^\times \to \C^\times$ be the unique Dirichlet character satisfying $\eps(2)=\zeta_6 := e^{\frac{2\pi i}{6}}$. It is even and has order $6$. A basis of $S_2(\Gamma_1(13))$ is given by $(f_\eps,f_\epsb)$, where $f_\eps$ (resp. $f_\epsb$) is a newform having character $\eps$ (resp. $\epsb$). The Fourier coefficients of $f_\eps$ and $f_\epsb$ belong to the field $\Q(\zeta_6)$ and are complex conjugate to each other. We define $f=f_\eps+f_\epsb$.

We denote by $\langle d \rangle$ the diamond automorphism of $X_1(13)$ associated to $d \in (\Z/13\Z)^\times/\pm 1$.

Let $\hat{\mathcal{H}}=H_1(X_1(13)(\C),\{\mathrm{cusps}\},\Z)$ be the homology group of $X_1(13)(\C)$ relative to the cusps. Let $E_{13}$ be the set of non-zero vectors of $(\Z/13\Z)^2$. For any $x \in E_{13}$, we let $\xi(x) = \{g_x 0, g_x \infty\}$, where $g_x \in \SL_2(\Z)$ is any matrix whose bottom line is congruent to $x$ modulo $13$. Using Manin's algorithm \cite{manin} and its implementation in Magma \cite{magma}, we find that a $\Z$-basis of $\mathcal{H}=H_1(X_1(13)(\C),\Z)$ is given by
\begin{align*}
\gamma_1 & = \xi(1,-5)-\xi(2,5)-\xi(1,-2) = \left\{\frac15,\frac25\right\}\\
\gamma_2 & = \langle 2 \rangle_* \gamma_1 = \xi(2,3)-\xi(4,-3)-\xi(2,-4)\\
\gamma_3 & = \xi(1,-3)-\xi(1,3) = \left\{\frac13,-\frac13\right\}\\
\gamma_4 & = \langle 2 \rangle_* \gamma_3 = \xi(2,-6)-\xi(2,6).
\end{align*}

Consider the pairing
\begin{align*}
\langle \cdot,\cdot \rangle : \hat{\mathcal{H}} \times S_2(\Gamma_1(13)) & \to \C\\
(\gamma,f) & \mapsto \int_\gamma 2\pi i f(z)dz.
\end{align*}

\begin{definition}
Let $\mathcal{H}^- := \{ \gamma \in \mathcal{H} : c_* \gamma=-\gamma\}$. We define the map
\begin{align*}
\iota : \mathcal{H}^- & \to \C\\
\gamma & \mapsto \langle \gamma,f_\varepsilon \rangle.
\end{align*}
\end{definition}

\begin{lem} \label{iota injective}
The map $\iota$ is injective.
\end{lem}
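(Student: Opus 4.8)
The plan is to realize $\iota$ as one coordinate of a full period map and to show that, on $\mathcal{H}^-$, this single coordinate already pins down the whole period vector. Recall that $X_1(13)$ has genus $2$, so $\mathcal{H}$ is free of rank $4$; since complex conjugation $c$ comes from the $\Q$-structure of $X_1(13)$ and its action $c^*$ on $H^1(X_1(13)(\C),\C)$ swaps $H^{1,0}$ with $H^{0,1}$, the subgroup $\mathcal{H}^-$ is free of rank $2$. I would introduce the period map
\begin{equation*}
P : \mathcal{H}\otimes\R \to \C^2, \qquad \gamma \mapsto (\langle\gamma,f_\eps\rangle, \langle\gamma,f_\epsb\rangle),
\end{equation*}
and reduce the lemma to two claims: that $P$ is injective, and that for $\gamma\in\mathcal{H}^-$ the coordinate $\langle\gamma,f_\epsb\rangle$ is determined by $\langle\gamma,f_\eps\rangle$.

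For the injectivity of $P$, suppose $P(\gamma)=0$. Since $(f_\eps,f_\epsb)$ is a basis of $S_2(\Gamma_1(13))$, this means $\int_\gamma\omega=0$ for every holomorphic $1$-form $\omega$. As $\gamma$ has real coefficients, conjugating gives $\int_\gamma\overline{\omega}=0$ for every holomorphic $\omega$ as well, so $\int_\gamma$ annihilates $H^{1,0}\oplus H^{0,1}=H^1(X_1(13)(\C),\C)$. By perfectness of the de Rham period pairing we get $\gamma=0$; a dimension count ($\dim_\R=4$ on both sides) then shows $P$ is even an isomorphism.

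The crux is the behaviour of the pairing under $c$. Writing $\omega_\eps=2\pi i f_\eps\,dz$ and using the anti-holomorphic involution $c(z)=-\bar z$ together with $\overline{f_\eps}=f_\epsb$ (the coefficients of $f_\eps$ and $f_\epsb$ are complex conjugate), a direct computation on Fourier expansions gives $c^*\omega_\eps=\overline{\omega_\epsb}$. Hence, for every $\gamma\in\mathcal{H}$,
\begin{equation*}
\langle c_*\gamma, f_\eps\rangle = \int_\gamma c^*\omega_\eps = \int_\gamma\overline{\omega_\epsb} = \overline{\langle\gamma, f_\epsb\rangle}.
\end{equation*}
Restricting to $\gamma\in\mathcal{H}^-$, where $c_*\gamma=-\gamma$, this becomes $\langle\gamma,f_\epsb\rangle=-\overline{\langle\gamma,f_\eps\rangle}$. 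Thus if $\iota(\gamma)=\langle\gamma,f_\eps\rangle=0$ then also $\langle\gamma,f_\epsb\rangle=0$, so $P(\gamma)=0$ and therefore $\gamma=0$ by the previous step, proving that $\iota$ is injective. I expect the main obstacle to be establishing $c^*\omega_\eps=\overline{\omega_\epsb}$ cleanly, since it requires simultaneously tracking the anti-holomorphic involution coming from the real structure and the interchange of the nebentypus characters $\eps$ and $\epsb$; as a fallback one could instead compute $c_*$ on the explicit basis $\gamma_1,\dots,\gamma_4$ and evaluate the periods numerically to check non-degeneracy directly.
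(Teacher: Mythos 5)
Your proof is correct and is essentially the paper's own argument: both rest on the conjugation relation $\langle c_*\gamma, f_\eps\rangle = \overline{\langle\gamma, f_\epsb\rangle}$ (your $c^*\omega_\eps = \overline{\omega_\epsb}$), which for $\gamma \in \mathcal{H}^-$ shows that $\iota(\gamma)=0$ forces $\langle\gamma,f_\epsb\rangle=0$ as well, and both then conclude from nondegeneracy of the integration pairing between real homology and $S_2(\Gamma_1(13))$. The paper merely compresses these two steps into one line, leaving implicit exactly the details (the period map being injective on real cycles, via holomorphic plus anti-holomorphic forms spanning $H^1$) that you spell out.
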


\begin{proof}[Proof]
If $\iota(\gamma)=0$ then $\langle \gamma,f_{\overline{\varepsilon}} \rangle = \overline{\langle c_* \gamma, f_\varepsilon \rangle} = -\overline{\langle \gamma,f_\varepsilon \rangle} = 0$. Thus $\gamma$ is orthogonal to $S_2(\Gamma_1(13))$, which implies $\gamma=0$.
\end{proof}

\begin{lem}\label{iota lattice}
The image of $\iota$ is the hexagonal lattice generated by $\iota(\gamma_3)$ and $\iota(\gamma_4) = \zeta_6 \iota(\gamma_3)$.
\end{lem}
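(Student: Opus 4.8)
The plan is to pin down $\mathcal{H}^-$ as a lattice and to exploit the diamond operator $\langle 2 \rangle$, whose action on $f_\eps$ is governed by the nebentypus $\eps$. The hexagonal shape should then fall out of the resulting $\Z[\zeta_6]$-module structure, and the only genuinely arithmetic input will be an integrality (saturation) statement.

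First I would record how $\iota$ interacts with $\langle 2 \rangle_*$. Since $X_1(13)$ and its diamond automorphisms are defined over $\Q$, complex conjugation commutes with $\langle 2 \rangle_*$; hence $\langle 2 \rangle_*$ preserves $\mathcal{H}^-$. On the other hand $f_\eps$ has nebentypus $\eps$, so the pullback of $2\pi i\, f_\eps\,dz$ by $\langle 2 \rangle$ is $2\pi i\,\eps(2) f_\eps\,dz$, whence $\iota(\langle 2 \rangle_* \gamma)=\eps(2)\,\iota(\gamma)=\zeta_6\,\iota(\gamma)$ for every $\gamma \in \mathcal{H}^-$.

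Next I would determine $\mathcal{H}^-$ explicitly. Using the description of $\gamma_1,\dots,\gamma_4$ as combinations of Manin symbols $\xi(x)$, I would compute the matrix of $c_*$ in this basis from its induced action on Manin symbols (the involution $\xi(a,b) \mapsto \xi(a,-b)$ coming from $z \mapsto -\overline{z}$ on $\mathfrak{H}$, together with $\xi(-x)=\xi(x)$). This yields $c_* \gamma_3 = -\gamma_3$, and since $c_*$ commutes with $\langle 2 \rangle_*$ also $c_* \gamma_4 = -\gamma_4$, so that $\gamma_3,\gamma_4 \in \mathcal{H}^-$. Reading off $\ker(c_*+\id)$ from the matrix, I would check that in fact $\mathcal{H}^- = \Z\gamma_3 + \Z\gamma_4$. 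Because $\gamma_4 = \langle 2 \rangle_* \gamma_3$, the previous paragraph then gives $\iota(\gamma_4)=\zeta_6\,\iota(\gamma_3)$, the asserted relation between the two generators.

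Finally I would combine the two. Since $X_1(13)$ has genus $2$, the lattice $\mathcal{H}^-$ is saturated of rank $2$ in $\mathcal{H} \cong \Z^4$; the identity $\iota \circ \langle 2 \rangle_* = \zeta_6 \cdot \iota$ makes $\mathcal{H}^-$ a module over $\Z[\langle 2 \rangle_*] \cong \Z[x]/(x^2-x+1) = \Z[\zeta_6]$, the ring of Eisenstein integers, which is a principal ideal domain. Being torsion-free of $\Z$-rank $2$, $\mathcal{H}^-$ is therefore free of rank $1$ over $\Z[\zeta_6]$, so its image under the injection $\iota$ is automatically a hexagonal lattice. Granting the computation above, $\Z$-linearity of $\iota$ gives $\iota(\mathcal{H}^-) = \Z\,\iota(\gamma_3) + \Z\,\iota(\gamma_4) = \Z[\zeta_6]\,\iota(\gamma_3)$, as claimed. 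The hexagonal shape is thus essentially formal; the delicate point, which I expect to be the main obstacle, is the integral statement $\mathcal{H}^- = \Z\gamma_3 + \Z\gamma_4$ — equivalently, that $\gamma_3$ generates $\mathcal{H}^-$ as a $\Z[\zeta_6]$-module rather than a proper finite-index submodule — and this is exactly what the explicit modular-symbols computation (Manin's algorithm, as implemented in Magma) is needed to settle.
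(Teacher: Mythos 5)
Your proposal is correct and follows essentially the same route as the paper: the explicit Manin-symbol computation of $c_*$ to establish $\mathcal{H}^- = \Z\gamma_3 \oplus \Z\gamma_4$, the nebentypus relation $\iota(\langle 2 \rangle_* \gamma) = \eps(2)\,\iota(\gamma)$ giving $\iota(\gamma_4)=\zeta_6\,\iota(\gamma_3)$, and the injectivity of $\iota$ (Lemma \ref{iota injective}) to guarantee $\iota(\gamma_3)\neq 0$. The closing $\Z[\zeta_6]$-module packaging is a pleasant reformulation, not a different argument, and you correctly identify the one genuinely computational input (the determination of $\mathcal{H}^-$) that the paper settles with the explicit $c_*$ formulas.
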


\begin{proof}[Proof]
The action of complex conjugation on $\mathcal{H}$ is given by
\begin{align*}
c_*(\gamma_1) & = \gamma_1+\gamma_4\\
c_*(\gamma_2) & = \gamma_2 - \gamma_3 + \gamma_4\\
c_*(\gamma_3) & = -\gamma_3\\
c_*(\gamma_4) & = -\gamma_4.
\end{align*}
From these formulas, it is clear that a $\Z$-basis of $\mathcal{H}^-$ is given by $(\gamma_3,\gamma_4)$. By Lemma \ref{iota injective}, we have $\iota(\gamma_3) \neq 0$. Then
\begin{equation*}
\iota(\gamma_4) = \langle \langle 2 \rangle_* \gamma_3, f_\varepsilon \rangle = \langle \gamma_3, f_\varepsilon | \langle 2 \rangle \rangle = \varepsilon(2) \iota(\gamma_3) = \zeta_6 \iota(\gamma_3).
\end{equation*}
\end{proof}

We have $\gamma_3 = \{\frac13,-\frac13\} = \{\frac13,g_1 \left(\frac13\right)\}$ with $g_1 = \begin{pmatrix} 14 & -5 \\ -39 & 14 \end{pmatrix} \in \Gamma_1(13)$. Let us choose $z_0 = \frac{14+i}{39}$. Then 
$g_1 (z_0) = \frac{-14+i}{39}$. We have
\begin{equation*}
\langle \gamma_3,f_\varepsilon \rangle = \int_{z_0}^{g_1 z_0} 2\pi i f_\varepsilon(z) dz = \sum_{n=1}^{\infty} \frac{a_n(f_\varepsilon)}{n} \left(e^{\frac{-28\pi i n}{39}}-e^{\frac{28\pi i n}{39}}\right) e^{-\frac{2\pi n}{39}}.\end{equation*}
Using Magma, we get numerically
\begin{equation*}
\langle \gamma_3,f_\varepsilon \rangle \sim 1.06759 - 2.60094i.
\end{equation*}

\begin{pro} \label{pro gamma0}
Let $\gamma_P \in \mathcal{H}^-$ be Deninger's cycle. We have $\gamma_P=\gamma_3$.
\end{pro}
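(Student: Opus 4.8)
The plan is to use the injectivity of $\iota$ (Lemma \ref{iota injective}) together with the explicit description of the lattice $\iota(\mathcal{H}^-)$ (Lemma \ref{iota lattice}) in order to reduce the identification of $\gamma_P$ to a single numerical computation. By the lemma asserting $c_* \gamma_P = -\gamma_P$, we have $\gamma_P \in \mathcal{H}^-$; since $(\gamma_3,\gamma_4)$ is a $\Z$-basis of $\mathcal{H}^-$, we may write $\gamma_P = a\gamma_3 + b\gamma_4$ with $a,b \in \Z$. Applying $\iota$ and using $\iota(\gamma_4) = \zeta_6 \iota(\gamma_3)$ gives $\iota(\gamma_P) = (a + b\zeta_6)\iota(\gamma_3)$. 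Because $\iota$ is injective, it suffices to compute $\iota(\gamma_P) = \langle \gamma_P, f_\varepsilon \rangle$ numerically and to read off the pair $(a,b)$ from the quotient $\iota(\gamma_P)/\iota(\gamma_3) \in \Z[\zeta_6]$, comparing with the value $\iota(\gamma_3) \sim 1.06759 - 2.60094\, i$ obtained above.

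The substance is therefore the numerical evaluation of $\iota(\gamma_P) = \int_{\gamma_P} 2\pi i f_\varepsilon(z)\,dz$, for which I would make the modular parametrization of $Z$ explicit. Using Lecacheux's model, one expresses the coordinates $H = x$ and $h = y$ as modular functions on $X_1(13)$, that is, as $q$-expansions in a local parameter at the relevant cusp. Parametrizing $T$ by $H = e^{i\theta}$ with $\theta$ running from $0$ to $2\pi$, for each $\theta$ one selects the unique root $h(\theta)$ of $P(e^{i\theta},\cdot)$ with $0 < |h(\theta)| < 1$, thereby obtaining a continuous loop on $Z(\C)$ that lifts to a path $z(\theta)$ in $\mathfrak{H}$. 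The loop being closed on the quotient, there exists $g \in \Gamma_1(13)$ with $z(2\pi) = g\,z(0)$, and then $\iota(\gamma_P) = \int_{z(0)}^{g z(0)} 2\pi i f_\varepsilon(z)\,dz$, which I would evaluate term by term from the Fourier expansion of $f_\varepsilon$ exactly as in the computation of $\langle \gamma_3, f_\varepsilon \rangle$ carried out above. The canonical orientation of $\gamma_P$ coming from $T$ fixes the sign of the resulting pair $(a,b)$.

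The hard part will be the faithful tracking of the lift $z(\theta)$ and the resulting determination of the matrix $g$: one must control how the path moves between fundamental-domain regions for $\Gamma_1(13)$ as $H$ winds once around $0$, in particular near the cusps where $H$ or $h$ degenerates, so that the two endpoints are correctly matched. Inverting the $q$-expansions to recover $z(\theta)$ is delicate where $q$ is not small, so one may have to use the $\Gamma_1(13)$-action to move into a region where the expansions converge well. By contrast, the final comparison is numerically robust, since the hexagonal lattice $\iota(\mathcal{H}^-)$ is well separated and only moderate precision is needed to distinguish $a + b\zeta_6$; I expect the computation to yield $\iota(\gamma_P) \approx \iota(\gamma_3)$, hence $a = 1$, $b = 0$, and therefore $\gamma_P = \gamma_3$.
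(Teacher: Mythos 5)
Your reduction is exactly the paper's: both proofs use the conjugation lemma to place $\gamma_P$ in $\mathcal{H}^-$, write it in the basis $(\gamma_3,\gamma_4)$, and exploit the injectivity of $\iota$ (Lemma \ref{iota injective}) plus the discreteness of the lattice $\iota(\mathcal{H}^-)$ (Lemma \ref{iota lattice}) so that a moderate-precision numerical value of $\iota(\gamma_P)$ pins down the integer coefficients. Where you genuinely diverge is in how $\iota(\gamma_P)=\langle\gamma_P,f_\varepsilon\rangle$ is evaluated. You pull the \emph{cycle} up to $\mathfrak{H}$: invert the $q$-expansions of $H$ and $h$ along the loop, track the lift $z(\theta)$ across fundamental domains, find the gluing matrix $g\in\Gamma_1(13)$, and integrate the Fourier series of $f_\varepsilon$ from $z(0)$ to $gz(0)$. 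The paper instead pushes the \emph{differential} down to the algebraic model: it takes the explicit basis $(\omega,h\omega)$ of $\Omega^1(X_1(13))$ with $\omega$ a rational differential in $(H,h)$, determines the constants in $2\pi i f_\varepsilon(z)dz=\alpha\omega+\beta h\omega$ by a purely local comparison of $q$-expansions at the cusp $\infty$, and then computes $\int_{\gamma_P}\omega$ and $\int_{\gamma_P}h\omega$ by ordinary numerical quadrature over the explicitly parametrized cycle ($H=e^{i\theta}$, $h=h(H)$ the root inside the unit disk). This sidesteps entirely what you correctly identify as your hard step --- the global path-tracking and domain-matching needed to invert the modular parametrization, which is delicate where $|q|$ is not small --- replacing it by a local expansion plus a one-dimensional integral of algebraic functions, which Pari/GP handles robustly. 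Your route is sound in principle and would yield the same values $\iota(\gamma_P)\sim 1.06759-2.60094i\sim\iota(\gamma_3)$, hence $(a,b)=(1,0)$; but if you carry it out, expect the determination of $g$ and the control of the lift near degenerate points to be the dominant (and error-prone) part of the work, whereas the paper's transfer of the problem to the algebraic side makes the computation essentially mechanical.
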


\begin{proof}[Proof]
A $\Q$-basis of $\Omega^1(X_1(13))$ is given by $(\omega, h\omega)$ where
\begin{equation*}
\omega = \frac{(h^2 - h)H - h^3 + h^2 + 2h - 1}{h^4 - 2h^3 + 3h^2 - 2h + 1}dH.
\end{equation*}
Using Magma, we compute the Fourier expansion of $\omega$ and $h\omega$ at infinity, and deduce
\begin{equation}\label{feps omega}
2\pi i f_\varepsilon(z)dz = \alpha \omega + \beta h\omega
\end{equation}
with
\begin{equation*}
\alpha \sim 0.71163+0.70256i \qquad \beta \sim 0.25262-0.96757i.
\end{equation*}
Note that $\alpha$ and $\beta$ are algebraic numbers, but we won't need an explicit formula for them. With Pari/GP \cite{pari273}, we find
\begin{equation}\label{int gammaP}
\int_{\gamma_P} \omega \sim - 3.21731i \qquad \int_{\gamma_P} h\omega \sim - 1.23275i.
\end{equation}
From (\ref{feps omega}) and (\ref{int gammaP}), it follows that
\begin{equation*}
\langle \gamma_P, f_\varepsilon \rangle \sim 1.06759-2.60094i \sim \langle \gamma_3, f_\varepsilon \rangle.
\end{equation*}
Since the image of $\iota$ is a lattice by Lemma \ref{iota lattice}, we may ascertain that $\gamma_P=\gamma_3$.
\end{proof}

We will also need to make explicit the action of the Atkin-Lehner involution $W_{13}$ on $\gamma_P$.

\begin{pro} \label{pro W13gamma0}
We have $W_{13} \gamma_P = \gamma_4-\gamma_3$.
\end{pro}

\begin{proof}[Proof]
By \cite[Thm 2.1]{Atkin-Li}, we have $W_{13} f_\varepsilon = w \cdot f_{\overline{\varepsilon}}$ with
\begin{equation}\label{eq w}
w = \frac{3\zeta_6-4}{13} \tau(\varepsilon) \sim -0.96425+0.26501i.
\end{equation}
We deduce
\begin{equation*}
\iota(W_{13} \gamma_P) = \langle \gamma_P, W_{13} f_\varepsilon \rangle = w \langle \gamma_P, f_{\overline{\varepsilon}} \rangle = w \overline{\langle c_* \gamma_P, f_\varepsilon \rangle} = -w \overline{\langle \gamma_P, f_\varepsilon \rangle} \sim 1.71869+2.22503i.
\end{equation*}
Moreover, we have
\begin{equation*}
\iota(\gamma_4) = \zeta_6 \iota(\gamma_3) \sim 2.78628-0.37591i \sim \iota(W_{13} \gamma_P) + \iota(\gamma_3).
\end{equation*}
Using Lemma \ref{iota lattice} again, we conclude that $W_{13} \gamma_P = \gamma_4-\gamma_3$.
\end{proof}

\section{Beilinson's theorem}

We now recall the explicit version of Beilinson's theorem on the modular curve $X_1(N)$ \cite{brunault:smf}. Let $\C(X_1(N))$ be the function field of $X_1(N)$. The \emph{regulator map} on $X_1(N)$ is defined by
\begin{align*}
r_N : K_2(\C(X_1(N))) & \to \Hom_\C(S_2(\Gamma_1(N)),\C)\\
\{u,v\} & \mapsto \left(f \mapsto \int_{X_1(N)(\C)} \eta(u,v) \wedge \omega_f \right)
\end{align*}
where $\omega_f := 2\pi i f(z) dz$. After tensoring with $\C$, we get a linear map
\begin{equation*}
r_N : K_2(\C(X_1(N))) \otimes \C \to \Hom_\C(S_2(\Gamma_1(N)),\C).
\end{equation*}
For any even non-trivial Dirichlet character $\chi : (\Z/N\Z)^\times \to \C^\times$, there exists a modular unit $u_{\chi} \in \mathcal{O}^{\times}(Y_1(N)(\C)) \otimes \C$ satisfying
\begin{equation*}
\log |u_{\chi}(z)| = \frac{1}{\pi} \lim_{\substack{s \rightarrow 1\\ \real(s)>1}} \left(\sideset{}{'}\sum_{(m,n) \in \Z^2} \frac{\chi(n) \cdot \imag(z)^s}{\abs{Nmz+n}^{2s}}\right) \qquad (z \in \h),
\end{equation*}
where $\sideset{}{'}\sum$ denotes that we omit the term $(m,n) = (0,0)$ (see \cite[Prop 5.3]{brunault:smf}).

\begin{remark}
We are working with the model of $X_1(N)$ in which the $\infty$-cusp is not defined over $\Q$, but rather over $\Q(\zeta_N)$. Therefore, the modular unit $u_\chi$ is not defined over $\Q$ but rather over $\Q(\zeta_N)$.
\end{remark}

\begin{thm}\cite[Thm 1.1]{brunault:smf}\label{explicit beilinson}
Let $f \in S_2(\Gamma_1(N),\psi)$ be a newform of weight $2$, level $N$ and character $\psi$. For any even primitive Dirichlet character $\chi : (\Z/N\Z)^\times \to \C^\times$, with $\chi \neq \overline{\psi}$, we have
\begin{equation}\label{explicit beilinson formula}
L(f,2) L(f,\chi,1) = \frac{N \pi \tau(\chi)}{2 \phi(N)} \bigl\langle r_N(\{u_{\overline{\chi}},u_{\psi\chi}\}), f \bigr\rangle
\end{equation}
where $L(f,\chi,s):=\sum_{n=1} a_n(f) \chi(n) n^{-s}$ denotes the $L$-function of $f$ twisted by $\chi$, $\tau(\chi):=\sum_{a \in (\Z/N\Z)^\times} \chi(a) e^{\frac{2\pi ia}{N}}$ denotes the Gauss sum of $\chi$, and $\phi(N)$ denotes Euler's function.
\end{thm}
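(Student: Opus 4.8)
The plan is to evaluate the regulator pairing $\langle r_N(\{u_{\overline{\chi}},u_{\psi\chi}\}),f\rangle=\int_{X_1(N)(\C)}\eta(u_{\overline{\chi}},u_{\psi\chi})\wedge\omega_f$ directly by the Rankin--Selberg method, exploiting that the logarithms of the modular units are real-analytic Eisenstein series. First I would use the relation $\operatorname{darg}(v)=-i(\partial-\overline{\partial})\log|v|$ recalled above to write $\eta(u,v)=\log|u|\operatorname{darg}(v)-\log|v|\operatorname{darg}(u)$ entirely in terms of the functions $\log|u_{\overline{\chi}}|$ and $\log|u_{\psi\chi}|$. Wedging with the holomorphic form $\omega_f=2\pi i f(z)\,dz$, only the $d\overline{z}$-components survive, and after applying Stokes' theorem on $X_1(N)(\C)$ minus small discs around zeros, poles and cusps I would integrate by parts to concentrate the derivative on a single factor, reducing the computation to an integral of the shape $\int_{\Gamma_1(N)\backslash\h}f(z)\,\log|u_{\psi\chi}(z)|\,\bigl(\overline{\partial}\log|u_{\overline{\chi}}(z)|\bigr)\wedge dz$, together with boundary contributions that must be shown to cancel.

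Next I would substitute the explicit formula for $\log|u_{\overline{\chi}}|$, namely the value at $s=1$ of the Eisenstein series $E_{\overline{\chi}}(z,s)=\sideset{}{'}\sum_{(m,n)\in\Z^2}\overline{\chi}(n)\,\imag(z)^s\,\abs{Nmz+n}^{-2s}$. Working at a general $s$ with $\real(s)$ large, where the series converges absolutely, I would rewrite it as a sum over the coset space $\Gamma_\infty\backslash\Gamma_1(N)$. This is the decisive step: the coset sum lets me \emph{unfold} the integral over the fundamental domain into an integral over the vertical strip $\{0\le x<1,\ y>0\}$, at the cost of untwisting $f$ and the remaining factor by the corresponding translations.

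On the strip I would expand the surviving factors into Fourier series in $x$ and integrate; orthogonality of the additive characters $e^{2\pi inx}$ selects the matching Fourier modes and reduces everything to a Mellin transform in $y$ against the coefficients $a_n(f)$. The resulting Dirichlet series is a Rankin--Selberg convolution of $f$ with the Eisenstein series, and because the latter is attached to a \emph{sum} of two Dirichlet characters this convolution factors as a product of two Hecke $L$-functions of $f$; this factorization is exactly the source of the two factors $L(f,2)$ and $L(f,\chi,1)$, the twist $\chi$ being carried by the character in the Eisenstein series. Specializing to $s=1$ and collecting the constants --- the Gauss sum $\tau(\chi)$, the powers of $2\pi$ coming from $\omega_f$ and from the normalization of $\log|u_{\overline{\chi}}|$, and the volume factors involving $N$ and $\phi(N)$ --- should yield the exact coefficient $\frac{N\pi\tau(\chi)}{2\phi(N)}$. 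The main difficulty is analytic rather than conceptual: since $E_{\overline{\chi}}$ is evaluated at $s=1$, the edge of absolute convergence, the unfolding and every interchange of sum, integral and limit must be justified by analytic continuation from $\real(s)\gg0$ and a careful passage to the limit; and the genuinely delicate bookkeeping is to track all signs, orientations and normalizations precisely enough to obtain the stated constant rather than a mere proportionality. The hypothesis $\chi\neq\overline{\psi}$ enters to ensure that $L(f,\chi,1)$ is the relevant twisted value and that no degenerate Eisenstein contribution spoils the factorization.
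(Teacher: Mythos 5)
The paper itself contains no proof of this statement: it is imported wholesale from \cite[Thm 1.1]{brunault:smf}, so there is nothing internal to compare against. Your sketch --- realizing $\log|u_{\overline{\chi}}|$ as a real-analytic Eisenstein series, unfolding the regulator integral over $\Gamma_\infty\backslash\Gamma_1(N)$ by the Rankin--Selberg method, and factoring the resulting convolution into $L(f,2)\,L(f,\chi,1)$ using that $f$ is an eigenform --- is precisely the strategy of the proof given in that cited reference (including the points you flag as delicate: the analytic continuation to $s=1$ and the bookkeeping of constants), so your approach coincides with the source's.
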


We will also need the following lemma.

\begin{lem}\label{lem ceta}
Let $c$ denote complex conjugation on $Y_1(N)(\C)$. For any even non-trivial Dirichlet characters $\chi,\chi' : (\Z/N\Z)^\times \to \C^\times$, we have $c^* \eta(u_\chi,u_{\chi'}) = -\eta(u_\chi,u_{\chi'})$.
\end{lem}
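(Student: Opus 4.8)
The plan is to reduce everything to the explicit description of $\log\abs{u_\chi}$ as an Eisenstein series and to exploit that complex conjugation is an \emph{antiholomorphic} involution. Concretely, $c$ is induced on $\h$ by the map $z\mapsto -\overline{z}$, which normalizes $\Gamma_1(N)$ and therefore descends to $Y_1(N)(\C)$.

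First I would show that $\log\abs{u_\chi}$ is invariant under $c$, i.e.\ $c^*\log\abs{u_\chi} = \log\abs{u_\chi}$. Starting from the defining series and using $\imag(-\overline{z})=\imag(z)$ together with $\abs{Nm(-\overline{z})+n} = \abs{n - Nmz}$, the substitution $m\mapsto -m$ (which leaves $n$, and hence the weight $\chi(n)$, untouched) identifies the series evaluated at $-\overline{z}$ with the series at $z$. The crucial point here is that only the index $m$ is affected, so the character $\chi$ is genuinely preserved, rather than being replaced by $\overline{\chi}$; this is what allows the final conclusion to stay within the same pair $(\chi,\chi')$.

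Next I would determine the effect of $c$ on $\operatorname{darg}$. Since $c$ is antiholomorphic it exchanges the Dolbeault operators, $c^*\partial = \overline{\partial}\, c^*$ and $c^*\overline{\partial} = \partial\, c^*$, whence $c^*(\partial-\overline{\partial}) = -(\partial-\overline{\partial})c^*$. Combining this with the identity $\operatorname{darg}(u) = -i(\partial-\overline{\partial})\log\abs{u}$ (extended $\C$-linearly to $u_\chi\in\mathcal{O}^\times(Y_1(N)(\C))\otimes\C$) and with the invariance from the previous step yields $c^*\operatorname{darg}(u_\chi) = -\operatorname{darg}(u_\chi)$.

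Finally I would substitute into $\eta(u_\chi,u_{\chi'}) = \log\abs{u_\chi}\operatorname{darg}(u_{\chi'}) - \log\abs{u_{\chi'}}\operatorname{darg}(u_\chi)$: the two $\log\abs{\cdot}$ factors are fixed while both $\operatorname{darg}$ factors change sign, so $c^*\eta(u_\chi,u_{\chi'}) = -\eta(u_\chi,u_{\chi'})$. The only real subtlety, and the step I would be most careful about, is the interaction of $c^*$ with $\partial-\overline{\partial}$: this sign flip, which encodes that $c$ reverses orientation, is exactly what produces the anti-invariance of $\eta$, and it must be paired with the \emph{character-preserving} invariance of $\log\abs{u_\chi}$ rather than a diamond-twisted variant (which would spoil the sign by a factor $\chi(a)\chi'(a)$).
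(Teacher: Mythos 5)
Your proposal is correct and follows essentially the same route as the paper: $c(z)=-\overline{z}$, invariance of $\log\abs{u_\chi}$, the exchange of $\partial$ and $\overline{\partial}$ under the antiholomorphic involution, and the resulting sign flip of $\operatorname{darg}(u_\chi)$ fed into the definition of $\eta$. The only difference is that you verify $c^*\log\abs{u_\chi}=\log\abs{u_\chi}$ explicitly from the Eisenstein series (via $m\mapsto -m$, keeping $\chi(n)$ intact), a step the paper simply asserts.
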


\begin{proof}[Proof]
Recall that $c$ is given by $c(z)=-\overline{z}$ on $\h$. We have $c^* \log |u_\chi| = \log |u_\chi|$, and $c^*$ exchanges the holomorphic and anti-holomorphic parts of $\operatorname{dlog} |u_\chi|$. Since $\operatorname{darg}(u_\chi) = -i (\partial-\overline{\partial}) \log |u_\chi|$, we get $c^* \operatorname{darg}(u_\chi) = -\operatorname{darg}(u_\chi)$, and thus $c^* \eta(u_\chi,u_{\chi'}) = -\eta(u_\chi,u_{\chi'})$.
\end{proof}

\begin{remark}
By \cite[Prop. 5.4 and Prop. 6.1]{brunault:smf}, we have $\{u_\chi,u_{\chi'}\} \in K_2(X_1(N)(\C)) \otimes \C$. This implies that for $\gamma \in H_1(Y_1(N)(\C),\Z)$, the integral $\int_\gamma \eta(u_{\chi},u_{\chi'})$ depends only on the image of $\gamma$ in $H_1(X_1(N)(\C),\Z)$ (see for example the discussion in \cite[\S 3]{dokchitser-dejeu-zagier}). Therefore, we have a well-defined map
\begin{equation*}
\int \eta(u_{\chi},u_{\chi'}) : H_1(X_1(N)(\C),\Z) \to \C.
\end{equation*}
It can be extended by linearity to $H_1(X_1(N)(\C),\C)$.
\end{remark}

\begin{remark}\label{rem int eta}
Since $c^* \eta(u_\chi,u_{\chi'}) = -\eta(u_\chi,u_{\chi'})$ by Lemma \ref{lem ceta}, we have $\int_\gamma \eta(u_\chi,u_{\chi'}) = \int_{\gamma^-} \eta(u_\chi,u_{\chi'})$ with $\gamma^- = \frac12(\gamma-c_* \gamma)$.
\end{remark}

\section{Merel's formula}

In this section, we express the regulator integral appearing in the right hand side of (\ref{explicit beilinson formula}) as a linear combination of periods. In order to do this, we use an idea of Merel to express the integral over $X_1(N)(\C)$ as a linear combination of products of $1$-dimensional integrals.

Let $N \geq 1$ be an integer. Let $E_N$ be the set of vectors $(u,v) \in (\Z/N\Z)^2$ such that $(u,v,N)=1$. For any $f \in S_2(\Gamma_1(N))$ and any $x \in E_N$, we define the \emph{Manin symbol}
\begin{equation*}
\xi_f(x) = -\frac{1}{2\pi} \langle \xi(x),f \rangle = -i \int_{g_x 0}^{g_x \infty} f(z) dz,
\end{equation*}
where $g_x \in \SL_2(\Z)$ is any matrix whose bottom row is congruent to $x$ modulo $N$.

Let $\rho = e^{\frac{\pi i}{3}}$ and $\sigma = \begin{pmatrix} 0 & -1 \\ 1 & 0\end{pmatrix}$, $\tau= \begin{pmatrix} 0 & -1 \\ 1 & -1 \end{pmatrix}$, $T=\begin{pmatrix} 1 & 1 \\ 0 & 1 \end{pmatrix} \in \SL_2(\Z)$.

The following theorem is a variant of a theorem of Merel which expresses the Petersson scalar product of two cusp forms $f$ and $g$ of weight 2 as a linear combination of products of Manin symbols of $f$ and $g$ \cite[Théorème 2]{merel:symbmanin}.

\begin{thm}\label{thm reg eta}
Let $f \in S_2(\Gamma_1(N))$ be a cusp form of weight 2 and level $N$, and let $u,v \in \mathcal{O}^\times(Y_1(N)(\C))$ be two modular units. We have
\begin{equation}\label{eq reg eta}
\int_{X_1(N)(\C)} \eta(u,v) \wedge \omega_f = \frac{\pi}{2} \sum_{x \in E_N} \left(\int_{g_x \rho}^{g_x \rho^2} \eta(u,v) \right) \xi_f(x).
\end{equation}
\end{thm}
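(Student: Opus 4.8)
The plan is to follow Merel's method \cite{merel:symbmanin}: unfold the integral over $X_1(N)(\C)$ into a sum over the cosets of $\Gamma_1(N)$ in $\SL_2(\Z)$, and then reduce each piece to a one-dimensional boundary integral by Stokes' theorem. Write $Y_1(N)(\C) = \Gamma_1(N)\backslash\h$ and, for each $x \in E_N$, choose $g_x \in \SL_2(\Z)$ as in the statement. Since the bottom row modulo $N$ determines the coset, we have $\SL_2(\Z) = \bigsqcup_{x \in E_N}\Gamma_1(N)g_x$, so that $D := \bigcup_{x \in E_N}g_x F$ is a fundamental domain for $\Gamma_1(N)$, where $F = \{z \in \h : |z| \geq 1,\ |\real(z)| \leq \frac12\}$ is the standard fundamental domain for $\SL_2(\Z)$. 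First I would therefore write
\begin{equation*}
\int_{X_1(N)(\C)}\eta(u,v)\wedge\omega_f = \sum_{x \in E_N}\int_{g_x F}\eta(u,v)\wedge\omega_f.
\end{equation*}

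Next I would introduce the primitive $G_f(z) = \sum_{n \geq 1}\frac{a_n(f)}{n}e^{2\pi i n z}$ of $\omega_f$, which is holomorphic on $\h$ and vanishes at the cusp $\infty$ (the series converges because $f$ is a cusp form). The modular units $u,v$ have divisors supported on the cusps, so $\eta(u,v)$ is smooth and closed on all of $Y_1(N)(\C)$, including at the elliptic points $\rho$ and $\rho^2$; hence on each $g_x F$ we may write $\eta(u,v)\wedge\omega_f = -d\bigl(G_f\,\eta(u,v)\bigr)$ and apply Stokes' theorem to obtain
\begin{equation*}
\int_{g_x F}\eta(u,v)\wedge\omega_f = -\int_{\partial(g_x F)}G_f\,\eta(u,v).
\end{equation*}
The boundary $\partial F$ decomposes into the circular arc joining $\rho$ to $\rho^2$ and the two vertical geodesics running from $\rho$ and $\rho^2$ up to the cusp $\infty$.

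The heart of the argument is the reorganization of these boundary contributions, using the relations $\sigma(\rho) = \rho^2$, $T(\rho^2) = \rho$ and $\tau(\rho) = \rho$, together with the bijections $x \mapsto x\sigma$ and $x \mapsto xT$ of $E_N$ and the $\Gamma_1(N)$-invariance of $\eta(u,v)$. On the vertical edges I would use the quasi-invariance $G_f(\gamma z) = G_f(z) + 2\pi i\int_z^{\gamma z}f(w)\,dw$ for $\gamma \in \SL_2(\Z)$ to reindex the sum and collect the terms at the cusps $g_x 0$ and $g_x\infty$ (note $g_x 0 = g_x\sigma\infty$, which is how the $\sigma$-neighbour of a tile contributes the cusp $g_x 0$). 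The decisive computation is that the values of the primitive at these cusps recover the Manin symbol, namely
\begin{equation*}
G_f(g_x\infty) - G_f(g_x 0) = 2\pi i\int_{g_x 0}^{g_x\infty}f(w)\,dw = -2\pi\,\xi_f(x),
\end{equation*}
which is precisely where the factor $\pi$ and the datum $\xi_f(x)$ enter. The circular arcs, traversed from $g_x\rho$ to $g_x\rho^2$, supply the second factor $\int_{g_x\rho}^{g_x\rho^2}\eta(u,v)$, and tracking the orientations and normalizations ($\omega_f = 2\pi i f\,dz$ together with the sign from Stokes) produces the overall constant $\frac{\pi}{2}$ and the sum over $x \in E_N$ claimed in \eqref{eq reg eta}.

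The main obstacle is exactly this boundary bookkeeping: one must show that the vertical-edge and cuspidal contributions, after reindexing by the $\sigma$- and $T$-actions on $E_N$, collapse cleanly into the products $\xi_f(x)\int_{g_x\rho}^{g_x\rho^2}\eta(u,v)$, with no residual terms. This requires care on two points. First, the convergence of the boundary integrals in the cuspidal neighbourhoods, which is controlled by the vanishing of $G_f$ at $\infty$ and by the fact that $\eta(u,v)$ grows only logarithmically at the cusps. Second, the consistent tracking of orientations and of the cocycle defect of $G_f$ through the relations $\sigma^2 = \tau^3 = 1$ in $\PSL_2(\Z)$, which is what guarantees that the non-arc contributions reassemble into the Manin symbols rather than leaving extra period terms behind.
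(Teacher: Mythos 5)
Your overall strategy (tessellate $Y_1(N)$ by $\SL_2(\Z)$-tiles, apply Stokes with a primitive of $\omega_f$, and reindex the boundary contributions through $T$ and $\sigma$) is the same as the paper's, but the execution has two genuine gaps. First, $D=\bigcup_{x\in E_N}g_xF$ is \emph{not} a fundamental domain for $\Gamma_1(N)$: since $-I$ acts trivially on $\h$ and $g_{-x}$, $-g_x$ lie in the same $\Gamma_1(N)$-coset, the tiles $g_xF$ and $g_{-x}F$ have the same image in $Y_1(N)(\C)$, so your $D$ covers $Y_1(N)(\C)$ twice. The tiles are indexed by $E_N/\pm 1$, which is exactly why the paper sums over $E_N/\pm 1$ and obtains the constant $\pi$ there, equivalent to $\frac{\pi}{2}$ over $E_N$. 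Your opening identity is therefore off by a factor of $2$ and cannot be reconciled with the constant $\frac{\pi}{2}$ you claim to recover at the end.

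Second, and more seriously, the choice of a single global primitive $G_f$ destroys the mechanism that produces the products $\xi_f(x)\int_{g_x\rho}^{g_x\rho^2}\eta(u,v)$. Since $G_f$ and $\eta(u,v)$ are globally defined on $\h$, the contributions of any edge shared by two tiles cancel identically --- no defect arises on interior vertical edges or arcs; defects appear only across the $\Gamma_1(N)$-identifications of the boundary of $D$, where they are periods $\int_z^{\gamma z}\omega_f$ of closed cycles of $\Gamma_1(N)$ (not Manin symbols), and at the cusps, where the truncated-horocycle contributions tend to $G_f(g_x\infty)$ times the residue of $\eta(u,v)$ (a tame-symbol term). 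Your convergence claim --- that the cusp terms are controlled by ``the vanishing of $G_f$ at $\infty$'' --- fails at every cusp other than $\infty$: the limits $G_f(g_x\infty)$ are non-zero in general (indeed $G_f(g_x\infty)-G_f(g_x 0)=-2\pi\xi_f(x)$ is precisely what you need to be non-zero), and $\eta(u,v)$ can have non-trivial residues there, so these terms neither vanish nor are accounted for in your bookkeeping. Relatedly, contributions of distinct edges \emph{add}, they never multiply, so a Manin symbol ``collected at the cusps'' from vertical edges cannot combine multiplicatively with an arc integral of $\eta$. The paper avoids all of this by attaching to each coset the primitive $F_x(z)=\int_\infty^z\omega_f|g_x$, which vanishes exponentially at $i\infty$: then the horocycle terms vanish, the relation $F_x(Tz)=F_{xT}(z)$ is exact (no defect, because $T$ fixes the basepoint $\infty$), so the vertical edges cancel after summing over $x$, and on the arc the relation $F_x(\sigma z)=F_{x\sigma}(z)+2\pi\xi_f(x)$ produces a defect that is \emph{constant along the arc}, hence factors out of $\int_\rho^{\rho^2}\eta(u,v)|g_x$ and yields the desired product. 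Your identity $G_f(g_x\infty)-G_f(g_x 0)=-2\pi\xi_f(x)$ is this same constant, but to make it actually appear in the Stokes computation you must abandon the global primitive and work with the per-coset primitives based at $\infty$.
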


\begin{proof}[Proof]
Let $\mathcal{F}$ be the standard fundamental domain of $\SL_2(\Z) \backslash \h$:
\begin{equation*}
\mathcal{F} = \{z \in \h : |\mathrm{Re}(z)| \leq \frac12, |z| \geq 1\}.
\end{equation*}
Its boundary $\partial \mathcal{F}$ is the hyperbolic triangle with vertices $\rho^2,\rho,\infty$. Define
\begin{equation*}
F_x(z) = \int_\infty^z \omega_f | g_x \qquad (x \in E_N, z \in \h).
\end{equation*}
We have
\begin{equation*}
\int_{X_1(N)(\C)} \eta(u,v) \wedge \omega_f = \sum_{x \in E_N/\pm 1} \int_{\mathcal{F}} (\eta(u,v) \wedge \omega_f) | g_x.
\end{equation*}
Since $\eta(u,v)$ is closed, we have $(\eta(u,v) \wedge \omega_f) | g_x = -d(F_x \cdot (\eta(u,v)|g_x))$ and Stokes' formula gives
\begin{align}
\nonumber \int_{X_1(N)(\C)} \eta(u,v) \wedge \omega_f & = - \sum_{x \in E_N/\pm 1} \int_{\partial \mathcal{F}} F_x \cdot (\eta(u,v)|g_x)\\
\label{eq int} & = - \sum_{x \in E_N/\pm 1} \left(\int_{\rho^2}^\rho + \int_\rho^\infty + \int_\infty^{\rho^2}\right) F_x \cdot (\eta(u,v)|g_x).
\end{align}
The matrix $T$ fixes $\infty$ and maps $\rho^2$ to $\rho$. We have
\begin{equation*}
F_x(Tz) = \int_\infty^{Tz} \omega_f | g_x = \int_\infty^{z} \omega_f | g_x T = F_{xT}(z).
\end{equation*}
It follows that
\begin{align*}
\sum_{x \in E_N/\pm 1} \int_\rho^\infty F_x \cdot (\eta(u,v)|g_x) & = \sum_{x \in E_N/\pm 1} \int_{\rho^2}^\infty F_x | T \cdot (\eta(u,v)|g_x T)\\
& = \sum_{x \in E_N/\pm 1} \int_{\rho^2}^\infty F_{xT} \cdot (\eta(u,v)|g_{xT})\\
& = \sum_{x \in E_N/\pm 1} \int_{\rho^2}^\infty F_x \cdot (\eta(u,v)|g_x).
\end{align*}
Hence (\ref{eq int}) simplifies to
\begin{equation*}
\int_{X_1(N)(\C)} \eta(u,v) \wedge \omega_f = \sum_{x \in E_N/\pm 1} \int_{\rho}^{\rho^2} F_x \cdot (\eta(u,v)|g_x).
\end{equation*}
Similarly, let us use the matrix $\sigma$, which exchanges $\rho$ and $\rho^2$, as well as $0$ and $\infty$. Since $F_x(\sigma z) = F_{x \sigma}(z)+2\pi \xi_f(x)$, we get
\begin{equation*}
\int_{\rho}^{\rho^2} F_x \cdot (\eta(u,v)|g_x) = \int_{\rho^2}^{\rho} F_{x\sigma} \cdot (\eta(u,v)|g_{x\sigma}) + 2\pi \xi_f(x) \int_{\rho^2}^{\rho} \eta(u,v) | g_x.
\end{equation*}
Summing over $x$ and using the fact that $\xi_f(x\sigma)=-\xi_f(x)$, we get
\begin{align*}
\int_{X_1(N)(\C)} \eta(u,v) \wedge \omega_f & = \frac12 \sum_{x \in E_N/\pm 1} 2\pi \xi_f(x) \int_{\rho^2}^{\rho} \eta(u,v) | g_{x\sigma}\\
& = \pi \sum_{x \in E_N/\pm 1} \xi_f(x) \int_\rho^{\rho^2} \eta(u,v) | g_{x}.
\end{align*}
\end{proof}

\begin{remark}
It can be shown that if $\{u,v\}$ defines an element in $K_2(X_1(N)(\C)) \otimes \Q$, then the cycle $\sum_{x \in E_N} \left(\int_{g_x \rho}^{g_x \rho^2} \eta(u,v)\right) \xi(x)$ is \emph{closed}. This follows from the fact that if $\gamma_P$ denotes a small loop around a cusp $P$ of $X_1(N)(\C)$, then $\int_{\gamma_P} \eta(u,v) = 2\pi  \log |\partial_P(u,v)|$, where $\partial_P(u,v)$ denotes the tame symbol of $\{u,v\}$ at $P$ (see for example \cite[\S 4, Lemma]{rodriguez:modular}).
\end{remark}

\begin{definition}
Let $f \in S_2(\Gamma_1(N))$ be a cusp form of weight 2 and level $N$. Consider the following relative cycle on $Y_1(N)(\C)$:
\begin{equation*}
\gamma_f := \sum_{x \in E_N} \xi_f(x) \{g_x \rho, g_x \rho^2\}.
\end{equation*}
Furthermore, let us define $\gamma_f^- := \frac12 (\gamma_f-c_* \gamma_f)$.
\end{definition}

Combining Theorem \ref{explicit beilinson}, Theorem \ref{thm reg eta} and Remark \ref{rem int eta}, we get the following result.

\begin{thm}\label{explicit beilinson 2}
Let $f \in S_2(\Gamma_1(N),\psi)$ be a newform of weight $2$, level $N$ and character $\psi$. For any even primitive Dirichlet character $\chi : (\Z/N\Z)^\times \to \C^\times$, with $\chi \neq \overline{\psi}$, we have
\begin{equation}\label{explicit beilinson formula 2}
L(f,2) L(f,\chi,1) = \frac{N \pi^2 \tau(\chi)}{4 \phi(N)} \int_{\gamma_f} \eta(u_{\overline{\chi}},u_{\psi\chi}) = \frac{N \pi^2 \tau(\chi)}{4 \phi(N)} \int_{\gamma_f^-} \eta(u_{\overline{\chi}},u_{\psi\chi}).
\end{equation}
\end{thm}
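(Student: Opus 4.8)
The plan is to assemble the three already-proven ingredients in sequence and simply track the constants. The target equation chains Theorem \ref{explicit beilinson} (the explicit Beilinson formula) with Theorem \ref{thm reg eta} (Merel's formula rewriting the regulator integral) and finally Remark \ref{rem int eta} (which replaces $\gamma_f$ by $\gamma_f^-$). First I would start from the formula of Theorem \ref{explicit beilinson}, namely
\begin{equation*}
L(f,2) L(f,\chi,1) = \frac{N \pi \tau(\chi)}{2 \phi(N)} \bigl\langle r_N(\{u_{\overline{\chi}},u_{\psi\chi}\}), f \bigr\rangle,
\end{equation*}
which is applicable because $\chi$ is even, primitive, and $\chi \neq \overline{\psi}$, and because $u_{\overline{\chi}}$ and $u_{\psi\chi}$ are the required modular units (both $\overline{\chi}$ and $\psi\chi$ are even non-trivial characters, so the units exist). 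I would then unwind the definition of the regulator map $r_N$, so that
\begin{equation*}
\bigl\langle r_N(\{u_{\overline{\chi}},u_{\psi\chi}\}), f \bigr\rangle = \int_{X_1(N)(\C)} \eta(u_{\overline{\chi}},u_{\psi\chi}) \wedge \omega_f.
\end{equation*}

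Next I would apply Theorem \ref{thm reg eta} with $u = u_{\overline{\chi}}$ and $v = u_{\psi\chi}$. This rewrites the surface integral as
\begin{equation*}
\int_{X_1(N)(\C)} \eta(u_{\overline{\chi}},u_{\psi\chi}) \wedge \omega_f = \frac{\pi}{2} \sum_{x \in E_N} \left(\int_{g_x \rho}^{g_x \rho^2} \eta(u_{\overline{\chi}},u_{\psi\chi}) \right) \xi_f(x).
\end{equation*}
Recognizing the definition of the relative cycle $\gamma_f = \sum_{x \in E_N} \xi_f(x) \{g_x \rho, g_x \rho^2\}$, the right-hand sum is exactly $\frac{\pi}{2} \int_{\gamma_f} \eta(u_{\overline{\chi}},u_{\psi\chi})$. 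Substituting this into the Beilinson formula and multiplying the constants $\frac{N\pi\tau(\chi)}{2\phi(N)} \cdot \frac{\pi}{2} = \frac{N\pi^2\tau(\chi)}{4\phi(N)}$ produces the first equality in (\ref{explicit beilinson formula 2}).

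Finally I would obtain the second equality, $\int_{\gamma_f} = \int_{\gamma_f^-}$, directly from Remark \ref{rem int eta}: by Lemma \ref{lem ceta} the form $\eta(u_{\overline{\chi}},u_{\psi\chi})$ is anti-invariant under complex conjugation, so integrating it over $\gamma_f$ and over its anti-invariant part $\gamma_f^- = \frac12(\gamma_f - c_*\gamma_f)$ gives the same value. I do not anticipate a genuine obstacle here, since the theorem is purely a matter of composing established results; the only point demanding care is the bookkeeping on the multiplicative constant and confirming that the cycle summed in Theorem \ref{thm reg eta} coincides with the relative cycle $\gamma_f$ appearing in its definition. One subtlety worth a line of justification is that $\gamma_f$ is a \emph{relative} cycle on $Y_1(N)(\C)$ rather than a genuine loop, but since $\{u_{\overline{\chi}},u_{\psi\chi}\}$ extends to $K_2(X_1(N)(\C)) \otimes \C$, the integral $\int_{\gamma_f} \eta(u_{\overline{\chi}},u_{\psi\chi})$ is well-defined and depends only on the class of $\gamma_f$ modulo boundaries of cusps, so the identification with the closed-cycle integral in Theorem \ref{thm reg eta} is legitimate.
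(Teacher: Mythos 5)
Your proof is correct and follows exactly the paper's own route: the paper establishes Theorem \ref{explicit beilinson 2} precisely by combining Theorem \ref{explicit beilinson}, Theorem \ref{thm reg eta} and Remark \ref{rem int eta}, which is your chain of steps with the same constant bookkeeping $\frac{N\pi\tau(\chi)}{2\phi(N)}\cdot\frac{\pi}{2}=\frac{N\pi^2\tau(\chi)}{4\phi(N)}$. Your closing caveat about well-definedness is harmless but not really needed, since $\int_{\gamma_f}\eta(u_{\overline{\chi}},u_{\psi\chi})$ is by definition the finite sum of path integrals between interior points $g_x\rho$, $g_x\rho^2$ appearing in Theorem \ref{thm reg eta}, and the passage to $\gamma_f^-$ only uses $c^*\eta=-\eta$ at the level of chains.
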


We will also need an explicit expression of $\gamma_f$ in terms of Manin symbols. For any $f \in S_2(\Gamma_1(N))$ and any $x=(u,v) \in E_N$, let us define $x^c=(-u,v)$ and
\begin{equation*}
\xi_f^+(x) = \frac12 (\xi_f(x)+\xi_f(x^c)) = \frac12 (\xi_f(x)+\overline{\xi_{f^*}(x)}),
\end{equation*}
where $f^*$ denotes the cusp form with complex conjugate Fourier coefficients.

\begin{pro}\label{pro gammaf}
Let $f \in S_2(\Gamma_1(N))$ be a cusp form of weight $2$ and level $N$. The cycle $\gamma_f$ is closed, and its image in $H_1(X_1(N)(\C),\Z)$ can be expressed as follows:
\begin{equation}\label{eq gammaf}
\gamma_f = -\frac13 \sum_{x \in E_N} \left(\xi_f(x)+2\xi_f(x \tau)\right) \xi(x).
\end{equation}
Moreover, we have
\begin{equation}\label{eq gammaf 2}
\gamma_f^- = -\frac13 \sum_{x \in E_N} \left(\xi_f^+(x)+2\xi_f^+(x \tau)\right) \xi(x).
\end{equation}
\end{pro}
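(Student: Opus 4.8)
The plan is to treat both displayed formulas uniformly by isolating the only two features of the coefficient function that the computation really uses. Write $\gamma_f = \sum_{x \in E_N} c(x)\{g_x\rho, g_x\rho^2\}$ with $c = \xi_f$, and note that $c$ factors through $E_N/\pm 1$ and inherits from the symbols $\xi(x)$ two Manin relations: the $\sigma$-relation $c(x\sigma) = -c(x)$ and the $\tau$-relation $c(x) + c(x\tau) + c(x\tau^2) = 0$. These come respectively from $\sigma$ exchanging $0$ and $\infty$, so that $\xi(x\sigma) = -\xi(x)$, and from $\tau$ cyclically permuting the cusps $\infty, 0, 1$, so that $\xi(x) + \xi(x\tau) + \xi(x\tau^2) = 0$; pairing against $f$ transports both to $\xi_f$. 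I claim that for \emph{any} function $c$ on $E_N/\pm 1$ satisfying these two relations one has $\sum_x c(x)\{g_x\rho, g_x\rho^2\} = -\frac13 \sum_x (c(x) + 2c(x\tau))\xi(x)$ in $H_1(X_1(N)(\C),\Z)$, and the proposition will follow by applying this once to $c = \xi_f$ and once to $c = \xi_f^+$.

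First I would prove that $\gamma_f$ is closed. Since $\rho^2 = \sigma\rho$ and $\tau$ fixes $\rho$, the point $[g_x\rho]$ on $Y_1(N)(\C)$ depends only on the right $\langle\tau\rangle$-orbit of $x$, and $[g_x\rho^2] = [g_{x\sigma}\rho]$. Computing $\partial\gamma_f = \sum_x c(x)([g_x\rho^2] - [g_x\rho])$ and reindexing the first sum by $x \mapsto x\sigma$, using $c(x\sigma) = -c(x)$, turns it into $-2\sum_x c(x)[g_x\rho]$; grouping by $\langle\tau\rangle$-orbits, the coefficient of each $[g_x\rho]$ is $-2(c(x) + c(x\tau) + c(x\tau^2)) = 0$. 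Hence $\gamma_f$ descends to a class in $H_1(X_1(N)(\C),\Z)$.

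Next comes the core computation, which is pure modular-symbol bookkeeping carried out in homology relative to the cusps and to the image of $\rho$. Using $\rho^2 = \sigma\rho$ and additivity of $\{a,b\}$, I would first establish $\{g_x\rho, g_x\rho^2\} = \xi(x) - B_x + B_{x\sigma}$, where $B_x := \{g_x 0, g_x\rho\}$. Since $\tau$ fixes $\rho$ and sends $0 \mapsto 1$, $\infty \mapsto 0$, this gives the two relations $B_x - B_{x\tau} = -\xi(x\tau)$ and $B_x - B_{x\tau^2} = \xi(x)$. Summing $\gamma_f = \sum_x c(x)(\xi(x) - B_x + B_{x\sigma})$ and reindexing the $B_{x\sigma}$-term by the $\sigma$-relation yields $\gamma_f = \sum_x c(x)\xi(x) - 2S$ with $S := \sum_x c(x)B_x$. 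To evaluate $S$, I would form the three sums $\sum_x c(x\tau^i)B_x$ for $i = 0,1,2$: reindexing each and feeding in the two $B$-relations rewrites them through $S$ and through $\sum_x c(x)\xi(x)$, $\sum_x c(x)\xi(x\tau)$, while their total $\sum_x (c(x)+c(x\tau)+c(x\tau^2))B_x$ vanishes by the $\tau$-relation. This gives $3S = \sum_x c(x)\xi(x) - \sum_x c(x)\xi(x\tau)$, hence $\gamma_f = \frac13\sum_x c(x)\xi(x) + \frac23\sum_x c(x)\xi(x\tau)$; substituting $\xi(x\tau^2) = -\xi(x) - \xi(x\tau)$ (the $\tau$-relation for the $\xi$'s) converts this into the claimed $-\frac13\sum_x(c(x) + 2c(x\tau))\xi(x)$. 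Taking $c = \xi_f$ proves \eqref{eq gammaf}.

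Finally, for \eqref{eq gammaf 2} I would compute complex conjugation on the $\rho$-paths directly: $c$ exchanges $\rho$ and $\rho^2$ and carries $g_x$ to $g_{x^c}$ up to $\Gamma_1(N)$, so $c_*\{g_x\rho, g_x\rho^2\} = -\{g_{x^c}\rho, g_{x^c}\rho^2\}$; reindexing by $x \mapsto x^c$ gives $\gamma_f^- = \frac12(\gamma_f - c_*\gamma_f) = \sum_x \xi_f^+(x)\{g_x\rho, g_x\rho^2\}$. It then suffices to verify that $\xi_f^+$ again satisfies the $\sigma$- and $\tau$-relations, after which the general identity above applies verbatim with $c = \xi_f^+$. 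The $\sigma$-relation is immediate from $(x\sigma)^c = -x^c\sigma$ together with the $\sigma$-relation for $\xi_f$. The $\tau$-relation is the subtle point, and the only place I expect real friction: conjugation replaces $\tau$ by the order-$3$ element $\tau' = \begin{pmatrix} -1 & 0 \\ 0 & 1\end{pmatrix}\tau\begin{pmatrix} -1 & 0 \\ 0 & 1\end{pmatrix} \in \SL_2(\Z)$, which fixes $\rho$ and cyclically permutes the cusps $\infty, 0, -1$, so the triangle $\{\infty,0,-1\}$ supplies a second valid $3$-term relation; since $(x\tau^i)^c = x^c (\tau')^i$, this forces $\sum_i \xi_f((x\tau^i)^c) = 0$ and hence $\xi_f^+(x) + \xi_f^+(x\tau) + \xi_f^+(x\tau^2) = 0$. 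With both relations in hand, the boxed identity yields \eqref{eq gammaf 2}, the closedness of $\gamma_f^-$ following by the identical argument used for $\gamma_f$.
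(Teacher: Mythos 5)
Your proof is correct, and its skeleton is the same as the paper's: closedness of $\gamma_f$ via the $\sigma$- and $\tau$-relations at the elliptic point $\rho$; reduction of $\sum_x c(x)\{g_x\rho,g_x\rho^2\}$ to Manin symbols by cutting the geodesic through the cusps and reindexing along $\sigma$ and $\tau$; and the conjugation identity $c_*\{g_x\rho,g_x\rho^2\}=-\{g_{x^c}\rho,g_{x^c}\rho^2\}$ to reduce \eqref{eq gammaf 2} to the same computation with coefficient function $\xi_f^+$. The differences are organizational: you cut through $0$ (via $B_x=\{g_x0,g_x\rho\}$ and the linear system formed by the three sums $\sum_x c(x\tau^i)B_x$), where the paper cuts through $\infty$ and averages over $\tau$-orbits directly; and you isolate the computation as a lemma valid for any $\pm$-invariant $c$ satisfying the two relations, which the paper does only implicitly (``the same proof as above''). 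Your write-up genuinely adds one thing: the paper merely \emph{asserts} that the $\xi_f^+(x)$ satisfy the Manin relations, whereas you prove it, correctly identifying the subtlety that conjugation intertwines right multiplication by $\tau$ with right multiplication by $\tau'=J\tau J$ (with $J=\left(\begin{smallmatrix}-1&0\\0&1\end{smallmatrix}\right)$), which is not $\pm\tau^{\pm1}$, so the required $3$-term relation is the one attached to the hyperbolic triangle $(0,-1,\infty)$ rather than $(0,1,\infty)$. (Alternatively, one could invoke the paper's identity $\xi_f(x^c)=\overline{\xi_{f^*}(x)}$ and the Manin relations for $\xi_{f^*}$.) One small slip: $\tau'$ fixes $\rho^2=c(\rho)$, not $\rho$; this is harmless, since your derivation of the relation only uses that $\tau'$ cyclically permutes the cusps $0,-1,\infty$. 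Finally, note that your bookkeeping lands exactly on the stated formula $-\frac13\sum_x\bigl(c(x)+2c(x\tau)\bigr)\xi(x)$, whereas the last displayed line of the paper's own proof contains a sign slip ($\frac13\sum(\xi_f(x)-2\xi_f(x\tau))\xi(x)$ should read $-\frac13\sum(\xi_f(x)+2\xi_f(x\tau))\xi(x)$); your computation confirms that the statement of the proposition, not that line, is the correct one.
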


\begin{proof}[Proof]
Let us compute the boundary of $\gamma_f$. Since $\sigma(\rho)=\rho^2$ and $\xi_f(x\sigma)=-\xi_f(x)$, we have
\begin{align*}
\partial \gamma_f & = \sum_{x \in E_N} \xi_f(x) ([g_x \rho^2]-[g_x \rho])\\
& = \sum_{x \in E_N} \xi_f(x) ([g_{x\sigma} \rho]-[g_x \rho])\\
& = -2 \sum_{x \in E_N} \xi_f(x) [g_x \rho].
\end{align*}
Since $\tau(\rho)=\rho$ and because of Manin's relation $\xi_f(x)+\xi_f(x\tau)+\xi_f(x\tau^2)=0$, we get
\begin{align*}
\partial \gamma_f & = -\frac23 \sum_{x \in E_N} \xi_f(x) ([g_x \rho] + [g_{x \tau} \rho]+[g_{x \tau^2} \rho])\\
& = -\frac23 \sum_{x \in E_N} (\xi_f(x)+\xi_f(x \tau)+\xi_f(x \tau^2)) [g_x \rho] = 0.
\end{align*}
On the other hand, we have
\begin{align*}
\gamma_f & = \sum_{x \in E_N} \xi_f(x) (\{g_x \rho, g_x \infty\}+ \{g_x \infty, g_x \rho^2\})\\
& = \sum_{x \in E_N} \xi_f(x) (\{g_x \rho, g_x \infty\} - \sum_{x \in E_N} \xi_f(x) \{g_x 0, g_x \rho\})\\
& = 2 \sum_{x \in E_N} \xi_f(x) \{g_x \rho, g_x \infty\} - \sum_{x \in E_N} \xi_f(x) \xi(x).
\end{align*}
Using the matrix $\tau$, we get
\begin{align*}
\gamma_f & = \frac23 \sum_{x \in E_N} \left(\xi_f(x) \{g_x \rho, g_x \infty\}+\xi_f(x\tau) \{g_{x\tau} \rho, g_{x\tau} \infty\}+\xi_f(x\tau^2) \{g_{x\tau^2} \rho, g_{x\tau^2} \infty\}\right) - \sum_{x \in E_N} \xi_f(x) \xi(x)\\
& = \frac23 \sum_{x \in E_N} \left(\xi_f(x) \{g_x \rho, g_x \infty\}+\xi_f(x\tau) \{g_{x} \rho, g_{x} 0\}+\xi_f(x\tau^2) \{g_{x} \rho, g_{x} 1\}\right) - \sum_{x \in E_N} \xi_f(x) \xi(x)\\
& = \frac23 \sum_{x \in E_N} \left(\xi_f(x\tau) \{g_{x} \infty, g_{x} 0\}+\xi_f(x\tau^2) \{g_{x} \infty, g_{x} 1\}\right) - \sum_{x \in E_N} \xi_f(x) \xi(x)\\
& = \frac23 \sum_{x \in E_N} \left(-\xi_f(x\tau) \xi(x) +\xi_f(x\tau^2) \{g_{x\tau^2} 0, g_{x\tau^2} \infty\}\right) - \sum_{x \in E_N} \xi_f(x) \xi(x)\\
& = \frac23 \sum_{x \in E_N} \left(-\xi_f(x\tau) \xi(x) +\xi_f(x) \xi(x)\right) - \sum_{x \in E_N} \xi_f(x) \xi(x)\\
& = \frac13 \sum_{x \in E_N} (\xi_f(x) - 2 \xi_f(x\tau)) \xi(x).
\end{align*}
This gives (\ref{eq gammaf}). The action of complex conjugation on $\gamma_f$ is given by
\begin{align*}
c_* \gamma_f & = \sum_{x \in E_N} \xi_f(x) \{c(g_x \rho),c(g_x \rho^2)\}\\
& = \sum_{x \in E_N} \xi_f(x) \{g_{x^c} \rho^2,g_{x^c} \rho\}\\
& = - \sum_{x \in E_N} \xi_f(x^c) \{g_x \rho,g_x \rho^2\}.
\end{align*}
It follows that
\begin{equation*}
\gamma_f^- = \sum_{x \in E_N} \xi_f^+(x) \{g_x \rho,g_x \rho^2\}.
\end{equation*}
Since the quantities $\xi_f^+(x)$ satisfy the Manin relations, the same proof as above gives (\ref{eq gammaf 2}).
\end{proof}

\section{Proof of the main theorem}

Let us return to the case $N=13$. Using Theorem \ref{explicit beilinson 2} with $f=f_\eps$, $\psi=\eps$ and $\chi=\eps^3$, we get
\begin{equation}\label{formula 1}
L(f_\eps,2) L(f_\eps,\eps^3,1) = \frac{13 \pi^2 \tau(\eps^3)}{48} \int_{\gamma_{f_\eps}^-} \eta(u_{\eps^3},u_{\epsb^2}).
\end{equation}
We are going to make explicit each term in this formula. Note that $\tau(\eps^3)=\sqrt{13}$.

\begin{definition}
For any Dirichlet character $\psi : (\Z/13\Z)^\times \to \C^\times$, let us denote $\mathcal{H}(\psi)$ (resp. $\hat{\mathcal{H}}(\psi)$) the $\psi$-isotypical component of $\mathcal{H} \otimes \C$ (resp. $\hat{\mathcal{H}} \otimes \C$) with respect to the action of diamond operators $\langle d \rangle_*$, $d \in (\Z/13\Z)^\times$. For any $\gamma \in \hat{\mathcal{H}} \otimes \C$, let $\gamma^\psi$ denote its $\psi$-isotypical component. Moreover, let us define $\hat{\mathcal{H}}^{\pm}(\psi) = (\hat{\mathcal{H}}^\pm \otimes \C) \cap \hat{\mathcal{H}}(\psi)$ and $\mathcal{H}^{\pm}(\psi) = (\mathcal{H}^\pm \otimes \C) \cap \mathcal{H}(\psi)$.
\end{definition}

\begin{lem}\label{lem Hpsi}
Let $\psi=\varepsilon$ or $\overline{\varepsilon}$. Then $\mathcal{H}^{\pm}(\psi)$ has dimension $1$, and a generator is given by
\begin{align*}
\gamma_\psi^+ & := \sum_{a \in (\Z/13\Z)^\times} \varepsilon^3(a) \xi(1,a)^\psi\\
\gamma_\psi^- & := \xi(1,-3)^\psi - \xi(1,3)^\psi.
\end{align*}
Moreover, we have $W_{13} \gamma_\psi^+ = \psi(2) \gamma_{\overline{\psi}}^+$.
\end{lem}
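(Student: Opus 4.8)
```latex
The plan is to analyze the $\psi$-isotypical components $\mathcal{H}(\psi)$ for $\psi = \eps$ or $\epsb$ directly, determine their dimensions, and then intersect with the $\pm$-eigenspaces of $c_*$ to pin down $\mathcal{H}^{\pm}(\psi)$. First I would recall that $S_2(\Gamma_1(13))$ is two-dimensional with basis $(f_\eps, f_\epsb)$, where $f_\eps$ has nebentypus $\eps$ and $f_\epsb$ has nebentypus $\epsb$. Via the pairing $\langle \cdot, \cdot \rangle$ and Poincaré duality, the homology $\mathcal{H} \otimes \C$ decomposes under the diamond operators into isotypical pieces, and the pieces matched to $f_\eps, f_\epsb$ should be exactly $\mathcal{H}(\eps)$ and $\mathcal{H}(\epsb)$. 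Since the diamond action $\langle 2 \rangle_*$ has order $6$ and the curve has genus $2$, I expect each of $\mathcal{H}(\eps)$ and $\mathcal{H}(\epsb)$ to be two-dimensional over $\C$; indeed $\mathcal{H} \otimes \C$ is four-dimensional, and the remaining isotypical characters contribute nothing because $\dim S_2 = 2$.

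Next I would use the known action of complex conjugation. Since $c_*$ anticommutes with $\langle d \rangle_*$ in the sense that $c_* \langle d \rangle_* = \langle -d \rangle_* c_* = \langle d \rangle_* c_*$ on the level of characters (because $\epsb(d) = \eps(d^{-1})$ and conjugation sends $\psi$-isotypical to $\overline{\psi}$-isotypical), the operator $c_*$ maps $\mathcal{H}(\psi)$ to $\mathcal{H}(\overline{\psi})$. But here $\psi = \eps$ and $\overline{\psi} = \epsb$ are distinct characters, so to get something fixed by the $\pm$-condition one must be careful: $\mathcal{H}^{\pm}(\psi)$ is defined as the intersection $(\mathcal{H}^{\pm} \otimes \C) \cap \mathcal{H}(\psi)$ inside $\mathcal{H} \otimes \C$. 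Because $c_*$ does not preserve $\mathcal{H}(\psi)$ itself, I would instead project: starting from the explicit generators $\gamma_3, \gamma_4$ of $\mathcal{H}^-$ (which already satisfy $c_* \gamma_3 = -\gamma_3$, $c_* \gamma_4 = -\gamma_4$ by Lemma \ref{iota lattice}), I would take their $\psi$-isotypical projections and check these are nonzero, giving a one-dimensional $\mathcal{H}^-(\psi)$. A similar construction using a cycle fixed by $c_*$ (built from $\gamma_1$ or the sum $\sum_a \eps^3(a)\xi(1,a)$) yields $\mathcal{H}^+(\psi)$.

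For the explicit generators, I would verify that $\gamma_\psi^- = \xi(1,-3)^\psi - \xi(1,3)^\psi$ is the $\psi$-component of $\gamma_3 = \xi(1,-3) - \xi(1,3)$, which lies in $\mathcal{H}^-$ by the conjugation formula $c_*\gamma_3 = -\gamma_3$; taking the $\psi$-projection preserves the $c_*$-eigenvalue up to exchanging $\psi \leftrightarrow \overline{\psi}$, and one checks the resulting class is nonzero via its pairing against $f_\eps$ using $\langle \gamma_3, f_\eps \rangle \neq 0$ from Proposition \ref{pro gamma0}. For $\gamma_\psi^+$, the sum $\sum_a \eps^3(a) \xi(1,a)$ is manifestly built to transform under $\langle d \rangle_*$ by $\eps^3$, but the $\psi$-projection selects the correct piece; I would confirm its $c_*$-eigenvalue is $+1$ by the action $c_* \xi(1,a) = \xi(1,-a)$ combined with $\eps^3(-1) = 1$. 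The dimension-one claims then follow since each isotypical component is at most two-dimensional and splits into a $+$ and a $-$ part under $c_*$ acting via the pairing structure.

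The main obstacle, and the final assertion to establish, is the Atkin-Lehner relation $W_{13} \gamma_\psi^+ = \psi(2) \gamma_{\overline{\psi}}^+$. The key point is that $W_{13}$ intertwines the diamond operators according to $W_{13} \langle d \rangle_* = \langle d \rangle_*^{-1} W_{13} = \langle d^{-1}\rangle_* W_{13}$, so $W_{13}$ carries $\mathcal{H}(\psi)$ to $\mathcal{H}(\overline{\psi})$, matching the characters on both sides. To extract the precise scalar $\psi(2)$, I would compute $W_{13}$ acting on the Manin symbols $\xi(1,a)$, using the explicit coset description and the formula for $W_{13}$ on $\Gamma_1(13)$, and then track how the character sum $\sum_a \eps^3(a) \xi(1,a)$ transforms; alternatively, and more cleanly, I would pair both sides against $f_\eps$ and $f_\epsb$ and use the Atkin-Lehner eigenvalue relation $W_{13} f_\eps = w \cdot f_\epsb$ from Proposition \ref{pro W13gamma0}, comparing with the numerical value of $w$ in \eqref{eq w} to fix the scalar. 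I expect the delicate bookkeeping of the character $\eps^3$ versus $\psi$ under the $W_{13}$-twist of diamonds to be where the factor $\psi(2)$ genuinely emerges, since $\eps^3 = \eps^{-3}$ has order $2$ while $\psi$ has order $6$, and the interaction $\eps^3(d)\psi(d^{-1})$ controls the precise normalization.
```
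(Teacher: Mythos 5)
Your proposal contains one outright error and two genuine gaps, so it does not amount to a proof. The error: complex conjugation does \emph{not} send $\mathcal{H}(\psi)$ to $\mathcal{H}(\overline{\psi})$. The diamond operators are defined over $\Q$, so $c_*$ commutes with each $\langle d \rangle_*$, and since $c_*$ is extended $\C$-linearly to $\mathcal{H}\otimes\C$ it \emph{preserves} each isotypical component $\mathcal{H}(\psi)$ --- this is exactly why $\mathcal{H}(\psi)=\mathcal{H}^+(\psi)\oplus\mathcal{H}^-(\psi)$ and why the lemma can be true at all (if $c_*$ swapped $\mathcal{H}(\eps)$ and $\mathcal{H}(\epsb)$, the intersections $\mathcal{H}^{\pm}(\psi)$ would vanish). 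Your second paragraph asserts both the commutation and the swap, and the ``workaround by projection'' built on the swap leaves your dimension count unjustified: you never rule out that the $2$-dimensional space $\mathcal{H}(\psi)$ splits as $2+0$ rather than $1+1$ under $c_*$. The paper settles this in one line: the period pairing induces a \emph{perfect} pairing $\mathcal{H}^{\pm}(\psi)\times S_2(\Gamma_1(13),\psi)\to\C$, and $S_2(\Gamma_1(13),\psi)$ is $1$-dimensional. A related slip: $\sum_a \eps^3(a)\xi(1,a)$ does not ``manifestly transform by $\eps^3$'' under diamonds, because Manin symbols for $\Gamma_1(13)$ are not invariant under scaling $(u,v)\mapsto(du,dv)$; the class $\gamma_\psi^+$ transforms by $\psi$ (through the projection), and $\eps^3$ enters only via the coefficients.

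The first gap is the heart of the paper's proof and is entirely absent from yours: $\gamma_\psi^+$ is a priori only a \emph{relative} cycle, since each $\xi(1,a)$ has boundary at the cusps, so before one can even assert that $\gamma_\psi^+$ generates $\mathcal{H}^+(\psi)\subset H_1(X_1(13)(\C),\C)$ one must prove $\partial\gamma_\psi^+=0$. The paper does this by writing $\partial\xi(u,v)=P_u-P_v$, expanding the $\psi$-projection, and invoking orthogonality of characters; the vanishing uses precisely that both $\eps^3$ and $\eps^3\psi$ are nontrivial. Second, you give no argument that $\gamma_\psi^+\neq 0$; the paper obtains this (together with coordinates needed later) by expressing $\gamma_\psi^+=(2-4\psi(2))\xi(1,2)^\psi+\xi(1,3)^\psi+\xi(1,-3)^\psi$ in the basis of $\hat{\mathcal{H}}(\psi)$ from \cite[Lemme 5]{rebolledo}. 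Finally, for $W_{13}\gamma_\psi^+=\psi(2)\gamma_{\overline{\psi}}^+$: your route (a) is in fact the paper's method (compute $W_{13}$ on $\xi(1,2)$ and $\xi(1,\pm 3)$ by explicit modular-symbol manipulations, re-express via Rebolledo's basis, and use the a priori proportionality $W_{13}\gamma_\psi^+\in\C\,\gamma_{\overline{\psi}}^+$ to read off the scalar $-\overline{\psi}(4)=\psi(2)$), but you leave it unexecuted; your ``cleaner'' route (b), fixing the scalar by numerical comparison with $w$, is not a proof as stated. Unlike Propositions \ref{pro gamma0} and \ref{pro W13gamma0}, where the unknown is confined to a discrete lattice (Lemma \ref{iota lattice}), here the unknown scalar a priori ranges over all of $\C$, so finite-precision numerics cannot identify it with $\psi(2)$ unless you first establish a discreteness statement (e.g.\ via integrality of $W_{13}$ on $\mathcal{H}$ and control of the coefficient ring of $\gamma_\psi^+$), which you do not do.
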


\begin{proof}[Proof]
The pairing $\langle \cdot,\cdot \rangle$ induces a perfect pairing
\begin{equation*}
\mathcal{H}^{\pm}(\psi) \times S_2(\Gamma_1(13),\psi) \to \C.
\end{equation*}
Since $S_2(\Gamma_1(13),\psi)$ is $1$-dimensional, we get $\dim_\C \mathcal{H}^{\pm}(\psi)=1$. From the definition, it is clear that $\gamma_\psi^+ \in \hat{\mathcal{H}}^+(\psi)$ and $\gamma_\psi^- \in \hat{\mathcal{H}}^-(\psi)$. Moreover, since $\gamma_\psi^- = \gamma_3^\psi$, we have $\gamma_\psi^- \in \mathcal{H}^-(\psi)$.

Let us compute the boundary of $\gamma_\psi^+$. For any $u,v \in (\Z/13\Z)^\times$, we have $\partial \xi(u,v) = P_u - P_v$ with $P_d:=\langle d \rangle(0)$. Moreover, for any $x \in E_{13}$, we have
\begin{equation*}
\xi(x)^\psi = \frac{1}{12} \sum_{d \in (\Z/13\Z)^\times} \overline{\psi}(d) \langle d \rangle_* \xi(x) = \frac{1}{12} \sum_{d \in (\Z/13\Z)^\times} \overline{\psi}(d) \xi(d x).
\end{equation*}
It follows that
\begin{align*}
\partial \gamma_\psi^+ & = \sum_{a \in (\Z/13\Z)^\times} \varepsilon^3(a) \partial(\xi(1,a)^\psi)\\
& = \frac{1}{12} \sum_{a \in (\Z/13\Z)^\times} \varepsilon^3(a) \sum_{d \in (\Z/13\Z)^\times} \overline{\psi}(d) \partial \xi(d,da)\\
& = \frac{1}{12} \sum_{a \in (\Z/13\Z)^\times} \varepsilon^3(a) \sum_{d \in (\Z/13\Z)^\times} \overline{\psi}(d) (P_d-P_{da})\\
& = \frac{1}{12} \sum_{d \in (\Z/13\Z)^\times} \left(\sum_{a \in (\Z/13\Z)^\times} \varepsilon^3(a)-\varepsilon^3\psi(a)\right)  \overline{\psi}(d) \cdot P_d = 0.
\end{align*}
Hence $\gamma_\psi^+ \in \mathcal{H}^+(\psi)$. By \cite[Lemme 5]{rebolledo}, the elements $\xi(1,0)^\psi, \xi(1,2)^\psi, \xi(1,3)^\psi, \xi(1,-3)^\psi$ form a basis of $\hat{\mathcal{H}}(\psi)$, and we can express $\gamma_\psi^+$ in terms of this basis. This gives
\begin{equation}\label{eq gammapsi}
\gamma_\psi^+ = (2-4\psi(2)) \xi(1,2)^\psi + \xi(1,3)^\psi + \xi(1,-3)^\psi.
\end{equation}
In particular $\gamma_\psi^+$ and $\gamma_\psi^-$ are nonzero, and thus they generate $\mathcal{H}^{\pm}(\psi)$.

It remains to compute the action of $W_{13}$ on $\gamma_\psi^+$. In view of (\ref{eq gammapsi}), it is enough to determine the action of $W_{13}$ on $\xi(1,2)$ and $\xi(1,3)$. We have
\begin{align*}
W_{13} \xi(1,2) & = \left\{\frac{2}{13},\infty\right\} = \left\{\frac{2}{13},\frac{1}{6}\right\}+\left\{\frac{1}{6},0\right\}+\{0,\infty\}\\
& = -\xi(0,-6)+\xi(1,-6)+\xi(0,1).
\end{align*}
Hence, using \cite[Lemme 5]{rebolledo} again, we get
\begin{align*}
W_{13} (\xi(1,2)^\psi) & = -\xi(0,-6)^{\overline{\psi}}+\xi(1,-6)^{\overline{\psi}}+\xi(0,1)^{\overline{\psi}}\\
& = (\overline{\psi}(6)-1) \xi(1,0)^{\overline{\psi}} - \overline{\psi}(6) \xi(1,2)^{\overline{\psi}}.
\end{align*}
Similarly, we find
\begin{align*}
W_{13} (\xi(1,3)^\psi) & = (\overline{\psi}(4)-1) \xi(1,0)^{\overline{\psi}} - \overline{\psi}(4) \xi(1,-3)^{\overline{\psi}}\\
W_{13} (\xi(1,-3)^\psi) & = (\overline{\psi}(4)-1) \xi(1,0)^{\overline{\psi}} - \overline{\psi}(4) \xi(1,3)^{\overline{\psi}}.
\end{align*}
Since we know that $W_{13} \gamma_\psi^+$ is a multiple of $\gamma_{\overline{\psi}}^+$, we deduce $W_{13} \gamma_\psi^+ = -\overline{\psi}(4) \gamma_{\overline{\psi}}^+ = \psi(2) \gamma_{\overline{\psi}}^+$.
\end{proof}

\begin{pro}\label{pro Lfeps3}
We have $L(f_\varepsilon,\varepsilon^3,1)=\frac{\overline{\varepsilon}(2)}{\sqrt{13}} \langle \gamma_\varepsilon^+, f_\varepsilon \rangle$.
\end{pro}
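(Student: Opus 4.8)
The plan is to start from the integral (Mellin) representation of the twisted $L$-value, feed it into the modular-symbol formalism, and then use the Atkin--Lehner involution $W_{13}$ to reach the cycle $\gamma_\varepsilon^+$. First I would write the twist of $f_\varepsilon$ by $\varepsilon^3$ as an average of translates. Since $\varepsilon^3$ is the quadratic character modulo $13$ it is real, so $\overline{\varepsilon^3}=\varepsilon^3$ and $\tau(\varepsilon^3)=\sqrt{13}$; the Birch-type identity gives $f_\varepsilon\otimes\varepsilon^3(z)=\frac{1}{\sqrt{13}}\sum_{a}\varepsilon^3(a)\,f_\varepsilon(z+\tfrac{a}{13})$, the sum over $a\in(\Z/13\Z)^\times$. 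Combining this with the formula $L(g,1)=\pm\int_0^{i\infty}\omega_g$ for a weight-$2$ cusp form $g$ (where $\omega_g=2\pi i g(z)\,dz$), and observing that each translated integral is the period along the modular symbol $\{\tfrac{a}{13},\infty\}$, I obtain $L(f_\varepsilon,\varepsilon^3,1)=\frac{1}{\sqrt{13}}\sum_{a}\varepsilon^3(a)\,\langle\{\tfrac{a}{13},\infty\},f_\varepsilon\rangle$ up to an overall sign fixed by the conventions.

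The next step is to identify the path $\{\tfrac{a}{13},\infty\}$. The Fricke involution $z\mapsto -1/(13z)$ sends $0$ to $\infty$ and $-\tfrac1a$ to $\tfrac{a}{13}$, so, using $\xi(1,a)=\{-\tfrac1a,0\}$, we get $\{\tfrac{a}{13},\infty\}=W_{13}\,\xi(1,a)$. By adjointness of $W_{13}$ for the period pairing (as already used in Proposition \ref{pro W13gamma0}) together with the relation $W_{13}f_\varepsilon=w\,f_{\overline{\varepsilon}}$ from (\ref{eq w}), each term becomes $w\,\langle\xi(1,a),f_{\overline{\varepsilon}}\rangle$. Summing against $\varepsilon^3(a)$ and using that the $\overline{\varepsilon}$-isotypical projection is transparent to the pairing with $f_{\overline{\varepsilon}}$, I get $\sum_a\varepsilon^3(a)\langle\xi(1,a),f_{\overline{\varepsilon}}\rangle=\langle\gamma_{\overline{\varepsilon}}^+,f_{\overline{\varepsilon}}\rangle$. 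Hence, up to the scalar $w/\sqrt{13}$, the quantity $L(f_\varepsilon,\varepsilon^3,1)$ equals $\langle\gamma_{\overline{\varepsilon}}^+,f_{\overline{\varepsilon}}\rangle$.

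Finally I must return from $f_{\overline{\varepsilon}}$ to $f_\varepsilon$ and from $\gamma_{\overline{\varepsilon}}^+$ to $\gamma_\varepsilon^+$. Here I would invoke Lemma \ref{lem Hpsi}, which gives $W_{13}\gamma_\varepsilon^+=\varepsilon(2)\gamma_{\overline{\varepsilon}}^+$, i.e. $\gamma_{\overline{\varepsilon}}^+=\overline{\varepsilon}(2)\,W_{13}\gamma_\varepsilon^+$. Applying adjointness once more and the companion relation $W_{13}f_{\overline{\varepsilon}}=w'f_\varepsilon$ rewrites $\langle\gamma_{\overline{\varepsilon}}^+,f_{\overline{\varepsilon}}\rangle=\overline{\varepsilon}(2)\,w'\langle\gamma_\varepsilon^+,f_\varepsilon\rangle$. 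The crucial arithmetic input is that $W_{13}^2$ acts trivially on weight-$2$ forms, i.e. $ww'=1$: with $w=\frac{3\zeta_6-4}{13}\tau(\varepsilon)$ and $w'=\frac{3\overline{\zeta_6}-4}{13}\tau(\overline{\varepsilon})$ one checks $(3\zeta_6-4)(3\overline{\zeta_6}-4)=13$, whence $ww'=\tau(\varepsilon)\tau(\overline{\varepsilon})/13=1$. Assembling the scalars $\tfrac{1}{\sqrt{13}}$, $w$, $\overline{\varepsilon}(2)$ and $w'$ then collapses everything to $\frac{\overline{\varepsilon}(2)}{\sqrt{13}}\langle\gamma_\varepsilon^+,f_\varepsilon\rangle$, as claimed.

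The main obstacle is the bookkeeping of the scalar in the last two paragraphs rather than any geometric subtlety. The twist inevitably forces the complex-conjugate form $f_{\overline{\varepsilon}}$ and the cycle $\gamma_{\overline{\varepsilon}}^+$ to appear, because the natural cusps produced by the twist have denominator $13$ whereas $\gamma_\varepsilon^+$ is built from the Manin symbols $\xi(1,a)$ with cusps $-\tfrac1a$; only the Fricke involution bridges the two, and only the combination of the pseudo-eigenvalue identity $ww'=1$ with the diamond factor $\varepsilon(2)$ of Lemma \ref{lem Hpsi} restores $f_\varepsilon$ and $\gamma_\varepsilon^+$ with the correct constant $\overline{\varepsilon}(2)/\sqrt{13}$. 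Tracking the Gauss sum $\tau(\varepsilon^3)=\sqrt{13}$ and, above all, the overall sign through the Mellin integral, the Fricke pullback and the isotypical projections is the delicate point; the geometric identifications $\{\tfrac{a}{13},\infty\}=W_{13}\xi(1,a)$ and $\xi(1,a)=\{-\tfrac1a,0\}$ are routine.
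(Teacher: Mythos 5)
Your proof is correct, and its skeleton is the same as the paper's: both start from the Birch--Manin formula writing $L(f_\varepsilon,\varepsilon^3,1)$ as $\tfrac{1}{\sqrt{13}}$ times the period of $f_\varepsilon$ over $\theta=\sum_a \varepsilon^3(a)\{\tfrac{a}{13},\infty\}$, both use the Fricke involution to identify $\{\tfrac{a}{13},\infty\}$ with $W_{13}\,\xi(1,a)$, and both conclude via Lemma \ref{lem Hpsi}. The difference is in how the involution is processed. The paper never leaves the homology side: it projects $\theta$ onto its $\varepsilon$-isotypical part, observes $W_{13}(\theta^\varepsilon)=(W_{13}\theta)^{\overline{\varepsilon}}=\gamma_{\overline{\varepsilon}}^+$, and then applies the cycle-level relation $W_{13}\gamma_{\overline{\varepsilon}}^+=\overline{\varepsilon}(2)\gamma_\varepsilon^+$ from Lemma \ref{lem Hpsi} before pairing with $f_\varepsilon$; no pseudo-eigenvalue appears anywhere. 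You instead transpose $W_{13}$ onto the forms twice --- once to turn $\langle W_{13}\xi(1,a),f_\varepsilon\rangle$ into $w\langle \xi(1,a),f_{\overline{\varepsilon}}\rangle$, and once more to come back from $f_{\overline{\varepsilon}}$ to $f_\varepsilon$ --- so the constants $w$ and $w'$ enter and you need the additional input $ww'=1$, which you justify both conceptually ($W_{13}^2=\id$ on weight-$2$ forms) and by the computation $(3\zeta_6-4)(3\overline{\zeta_6}-4)=13$, $\tau(\varepsilon)\tau(\overline{\varepsilon})=13$. All of this is valid, but note that your two adjointness steps exactly cancel: since you in any case invoke the relation $W_{13}\gamma_\psi^+=\psi(2)\gamma_{\overline{\psi}}^+$, you could have applied it directly to the cycle $\theta^\varepsilon=W_{13}\gamma_{\overline{\varepsilon}}^+$ and paired with $f_\varepsilon$, making $w$, $w'$ and the relation $ww'=1$ unnecessary. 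So your route uses strictly more inputs (Atkin--Li for both $f_\varepsilon$ and $f_{\overline{\varepsilon}}$, plus the weight-$2$ involutivity), whereas the paper's bookkeeping shows the pseudo-eigenvalue is irrelevant to this proposition and only enters later (Proposition \ref{pro W13gamma0} and the functional-equation step of the main proof). Finally, the overall sign you leave ``to the conventions'' is not a gap specific to your argument: it is fixed by the same convention implicit in the paper's citation of Manin's Theorem 4.2.b), on which both proofs rest.
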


\begin{proof}[Proof]
By \cite[Thm 4.2.b)]{manin}, we have
\begin{equation*}
L(f_\varepsilon,\varepsilon^3,1) = \frac{1}{\sqrt{13}} \sum_{a \in (\Z/13\Z)^\times} \varepsilon^3(a) \int_{a/13}^{\infty} \omega_{f_\varepsilon}.
\end{equation*}
Let us compute the cycle $\theta = \sum_{a \in (\Z/13\Z)^\times} \varepsilon^3(a) \{\frac{a}{13},\infty\}$ in terms of Manin symbols. We have
\begin{equation*}
W_{13} (\theta^\varepsilon) = (W_{13} \theta)^{\overline{\varepsilon}} = \sum_{a \in (\Z/13\Z)^\times} \varepsilon^3(a) \left\{-\frac{1}{a},0\right\}^{\overline{\varepsilon}} = \sum_{a \in (\Z/13\Z)^\times} \varepsilon^3(a) \xi(1,a)^{\overline{\varepsilon}} = \gamma_{\overline{\varepsilon}}^+.
\end{equation*}
By Lemma \ref{lem Hpsi}, it follows that
\begin{equation*}
\langle \theta, f_\varepsilon \rangle = \langle \theta^\varepsilon, f_\varepsilon \rangle = \langle W_{13}(\gamma_{\overline{\varepsilon}}^+), f_\varepsilon \rangle = \overline{\varepsilon}(2) \langle \gamma_\varepsilon^+, f_\varepsilon \rangle.
\end{equation*}
\end{proof}

\begin{pro}\label{pro gammafeps}
We have $\gamma_{f_\varepsilon}^- = \frac{1-2\zeta_6}{\pi} \langle \gamma_\varepsilon^+, f_\varepsilon \rangle \cdot \gamma_{\overline{\varepsilon}}^-$.
\end{pro}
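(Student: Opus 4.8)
The plan is to show first that $\gamma_{f_\varepsilon}^-$ is forced, up to a scalar, to be $\gamma_{\overline\varepsilon}^-$, and then to compute that scalar by a single pairing against $f_{\overline\varepsilon}$. For the \emph{location} of $\gamma_{f_\varepsilon}^-$, note that since $f_\varepsilon$ has character $\varepsilon$, the adjunction used in the proof of Lemma \ref{iota lattice} gives $\langle\xi(dx),f_\varepsilon\rangle=\langle\langle d\rangle_*\xi(x),f_\varepsilon\rangle=\varepsilon(d)\langle\xi(x),f_\varepsilon\rangle$, hence $\xi_{f_\varepsilon}(dx)=\varepsilon(d)\xi_{f_\varepsilon}(x)$ for all $d\in(\Z/13\Z)^\times$. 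As $(dx)^c=d\,x^c$, the symmetrised symbols obey $\xi_{f_\varepsilon}^+(dx)=\varepsilon(d)\xi_{f_\varepsilon}^+(x)$ as well. Substituting $y=dx$ into the closed expression (\ref{eq gammaf 2}) and using $\langle d\rangle_*\xi(x)=\xi(dx)$ together with $d^{-1}y\tau=d^{-1}(y\tau)$, I obtain $\langle d\rangle_*\gamma_{f_\varepsilon}^-=\overline\varepsilon(d)\gamma_{f_\varepsilon}^-$. Thus $\gamma_{f_\varepsilon}^-\in\mathcal{H}(\overline\varepsilon)$, and by construction $c_*\gamma_{f_\varepsilon}^-=-\gamma_{f_\varepsilon}^-$, so in fact $\gamma_{f_\varepsilon}^-\in\mathcal{H}^-(\overline\varepsilon)$. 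By Lemma \ref{lem Hpsi} this space is one-dimensional with generator $\gamma_{\overline\varepsilon}^-$, whence $\gamma_{f_\varepsilon}^-=\lambda\,\gamma_{\overline\varepsilon}^-$ for a unique $\lambda\in\C$.

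To \emph{isolate} $\lambda$, observe that the pairing of Lemma \ref{lem Hpsi} restricts to a perfect pairing between $\mathcal{H}^-(\overline\varepsilon)$ and $S_2(\Gamma_1(13),\overline\varepsilon)=\C f_{\overline\varepsilon}$, so it suffices to test against $f_{\overline\varepsilon}$:
\begin{equation*}
\lambda=\frac{\langle\gamma_{f_\varepsilon}^-,f_{\overline\varepsilon}\rangle}{\langle\gamma_{\overline\varepsilon}^-,f_{\overline\varepsilon}\rangle}.
\end{equation*}
Feeding (\ref{eq gammaf 2}) into the numerator and using $\langle\xi(x),f_{\overline\varepsilon}\rangle=-2\pi\,\xi_{f_{\overline\varepsilon}}(x)$ produces the Merel-type bilinear sum
\begin{equation*}
\langle\gamma_{f_\varepsilon}^-,f_{\overline\varepsilon}\rangle=\frac{2\pi}{3}\sum_{x\in E_{13}}\bigl(\xi_{f_\varepsilon}^+(x)+2\xi_{f_\varepsilon}^+(x\tau)\bigr)\,\xi_{f_{\overline\varepsilon}}(x).
\end{equation*}

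The decisive input for \emph{factorising} this sum is that $S_2(\Gamma_1(13),\varepsilon)$ and $S_2(\Gamma_1(13),\overline\varepsilon)$ are each one-dimensional. Consequently each $\xi_{f_\varepsilon}^+(x)=-\tfrac1{2\pi}\langle\tfrac12(\xi(x)+\xi(x^c)),f_\varepsilon\rangle$ is an algebraic (indeed $\Q(\zeta_6)$-rational) multiple of $\tfrac1{2\pi}\langle\gamma_\varepsilon^+,f_\varepsilon\rangle$, while $\xi_{f_{\overline\varepsilon}}(x)$ splits into its $+$ and $-$ parts, proportional to $\langle\gamma_{\overline\varepsilon}^+,f_{\overline\varepsilon}\rangle$ and $\langle\gamma_{\overline\varepsilon}^-,f_{\overline\varepsilon}\rangle$ respectively. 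Since the left-hand side equals $\lambda\langle\gamma_{\overline\varepsilon}^-,f_{\overline\varepsilon}\rangle$, only the $-$ part can survive, and the bilinear sum collapses to a single product $\langle\gamma_\varepsilon^+,f_\varepsilon\rangle\cdot\langle\gamma_{\overline\varepsilon}^-,f_{\overline\varepsilon}\rangle$ with an explicit algebraic coefficient. Dividing by $\langle\gamma_{\overline\varepsilon}^-,f_{\overline\varepsilon}\rangle$ then leaves $\lambda=\frac{1-2\zeta_6}{\pi}\langle\gamma_\varepsilon^+,f_\varepsilon\rangle$; here the two factors of $\tfrac1{2\pi}$ combine with the prefactor $\tfrac{2\pi}{3}$ to give $\tfrac1{6\pi}$, so that the underlying algebraic constant must work out to $6(1-2\zeta_6)$.

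The first two steps are formal; the real work is this last one. While the factorisation into a single product of a $+$ period of $f_\varepsilon$ and a $-$ period of $f_{\overline\varepsilon}$ is automatic from the one-dimensionality and the parity decomposition, pinning down the algebraic constant $6(1-2\zeta_6)$ \emph{exactly} requires careful bookkeeping of the Manin and Hecke relations among the symbols $\xi_{f_\varepsilon}(x)$ and $\xi_{f_{\overline\varepsilon}}(x)$, together with the relevant character values. I expect the factor $1-2\zeta_6$ to emerge precisely as in the coefficient $2-4\varepsilon(2)=2(1-2\zeta_6)$ governing the expansion (\ref{eq gammapsi}) of $\gamma_\varepsilon^+$, and this identification of the constant is the main obstacle.
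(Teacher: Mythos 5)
Your first two steps are correct, and they coincide with what the paper does implicitly: the diamond-equivariance $\xi^+_{f_\varepsilon}(dx)=\varepsilon(d)\,\xi^+_{f_\varepsilon}(x)$, the closedness from Proposition \ref{pro gammaf}, and the one-dimensionality in Lemma \ref{lem Hpsi} do force $\gamma_{f_\varepsilon}^-=\lambda\,\gamma_{\overline{\varepsilon}}^-$ for some $\lambda\in\C$. The genuine gap is in your last step, which you yourself flag as ``the main obstacle'': the algebraic constant is never computed, and that constant \emph{is} the content of the proposition. Moreover, the part you call ``automatic from the one-dimensionality and the parity decomposition'' is not automatic, for two reasons. (i) The numbers $\xi^{\pm}_{f_\varepsilon}(x)$ and $\xi^{\pm}_{f_{\overline{\varepsilon}}}(x)$ are periods over the \emph{relative} cycles $\tfrac12(\xi(x)\pm\xi(x^c))^{\psi}$, which are not closed; Lemma \ref{lem Hpsi} is a statement about the absolute homology $\mathcal{H}^{\pm}(\psi)$, so it does not make these numbers $\Q(\zeta_6)$-multiples of $\langle\gamma_\varepsilon^+,f_\varepsilon\rangle$, resp. $\langle\gamma_{\overline{\varepsilon}}^{\pm},f_{\overline{\varepsilon}}\rangle$. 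For that you need the kernel of the period pairing on $\hat{\mathcal{H}}^{\pm}(\psi)$ to be a \emph{rational} (Eisenstein) subspace --- a Manin--Drinfeld-type input --- or else an explicit computation with the relations of \cite[Lemme 5]{rebolledo}. (ii) Even granting (i), your bilinear sum splits as $A\,\langle\gamma_\varepsilon^+,f_\varepsilon\rangle\langle\gamma_{\overline{\varepsilon}}^+,f_{\overline{\varepsilon}}\rangle+B\,\langle\gamma_\varepsilon^+,f_\varepsilon\rangle\langle\gamma_{\overline{\varepsilon}}^-,f_{\overline{\varepsilon}}\rangle$ with $A,B$ algebraic, and you need $A=0$. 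Your justification --- only the $-$ part can survive because the left-hand side equals $\lambda\langle\gamma_{\overline{\varepsilon}}^-,f_{\overline{\varepsilon}}\rangle$ --- is circular: it presupposes either that $\lambda$ is an algebraic multiple of $\langle\gamma_\varepsilon^+,f_\varepsilon\rangle$ (which is what is being proved), or that the ratio $\langle\gamma_{\overline{\varepsilon}}^+,f_{\overline{\varepsilon}}\rangle/\langle\gamma_{\overline{\varepsilon}}^-,f_{\overline{\varepsilon}}\rangle$ is not algebraic, a transcendence statement that is not available. So establishing $A=0$ already requires the explicit bookkeeping you postpone.

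It is worth seeing how the paper's proof is arranged precisely to avoid both issues: it never pairs against $f_{\overline{\varepsilon}}$ and never factorizes periods. Starting from (\ref{eq gammaf 2}), it uses the diamond action to reduce to the fourteen symbols $\xi(x)^{\overline{\varepsilon}}$, then applies \cite[Lemme 5]{rebolledo} \emph{twice} --- to the coefficients, reducing every $\xi^+_{f_\varepsilon}(1,v)$ to a $\Q(\zeta_6)$-combination of $\xi^+_{f_\varepsilon}(1,2)$ and $\xi^+_{f_\varepsilon}(1,3)$, and to the cycles $\xi(1,v)^{\overline{\varepsilon}}$ --- and, knowing that the result is a multiple of $\gamma_{\overline{\varepsilon}}^-$, reads off the coefficient of $\xi(1,3)^{\overline{\varepsilon}}$ to get $\gamma_{f_\varepsilon}^-=\bigl(12\,\xi^+_{f_\varepsilon}(1,2)+(8\zeta_6-4)\,\xi^+_{f_\varepsilon}(1,3)\bigr)\gamma_{\overline{\varepsilon}}^-$. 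The identification of this coefficient with $\frac{1-2\zeta_6}{\pi}\langle\gamma_\varepsilon^+,f_\varepsilon\rangle$ is then immediate from (\ref{eq gammapsi}): both sides are the \emph{same} explicit $\Q(\zeta_6)$-linear combination of the two transcendental symbols $\xi^+_{f_\varepsilon}(1,2)$ and $\xi^+_{f_\varepsilon}(1,3)$, so no rationality of individual periods and no independence statement is ever needed. To repair your argument you would have to carry out an equivalent explicit Manin-symbol computation, at which point the detour through the pairing with $f_{\overline{\varepsilon}}$ buys nothing.
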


\begin{proof}[Proof]
By Proposition \ref{pro gammaf}, we have
\begin{equation*}
\gamma_{f_\varepsilon}^- = - \frac13 \sum_{x \in E_{13}} (\xi_{f_\eps}^+(x)+2\xi_{f_\eps}^+(x\tau)) \xi(x).
\end{equation*}
This sum involves 168 terms, but we may reduce it to 14 terms by considering the action of diamond operators. Let $\mathcal{E}$ be the set of 2-tuples $(0,1)$ and $(1,v)$, $v \in \Z/13\Z$. We have
\begin{align*}
\gamma_{f_\varepsilon}^- & = - \frac13 \sum_{x \in \mathcal{E}} \sum_{d \in (\Z/13\Z)^\times} (\xi_{f_\eps}^+(dx)+2\xi_{f_\eps}^+(dx\tau)) \xi(dx)\\
& = - \frac13 \sum_{x \in \mathcal{E}} \sum_{d \in (\Z/13\Z)^\times} (\xi_{f_\eps}^+(x)+2\xi_{f_\eps}^+(x\tau)) \cdot \eps(d) \langle d \rangle_* \xi(x)\\
& = -4 \sum_{x \in \mathcal{E}} (\xi_{f_\eps}^+(x)+2\xi_{f_\eps}^+(x\tau)) \xi(x)^\epsb.
\end{align*}
A simple computation shows that the terms $x=(0,1)$ and $x=(1,0)$ cancel each other. Hence
\begin{equation*}
\gamma_{f_\varepsilon}^- = -4 \sum_{v \in (\Z/13\Z)^{*}} \bigl(\xi^{+}_{f_{\eps}}(1,v)+2 \eps(v) \xi^{+}_{f_{\eps}}(1,1+\frac{1}{v})\bigr) \cdot \xi(1,v)^{\epsb}.
\end{equation*}
Using \cite[Lemme 5]{rebolledo}, we may express $\xi_{f_\eps}^+(1,v)$, $v \neq 0$ in terms of $\xi_{f_\eps}^+(1,2)$ and $\xi_{f_\eps}^+(1,3)$. We find $\xi^{+}_{f_{\eps}}(1,-v) = \xi^{+}_{f_{\eps}}(1,v)$ and
\begin{align*}
\xi^{+}_{f_{\eps}}(1,1) & = 0 & \xi^{+}_{f_{\eps}}(1,4) & = (1-\zeta_{6})
\xi^{+}_{f_{\eps}}(1,3)\\
\xi^{+}_{f_{\eps}}(1,5) & = (\zeta_{6}-1) \bigl(\xi^{+}_{f_{\eps}}(1,2) -
\xi^{+}_{f_{\eps}}(1,3)\bigr) &  \xi^{+}_{f_{\eps}}(1,6)& = (\zeta_{6}-1) \xi^{+}_{f_{\eps}}(1,2).
\end{align*}
Moreover, also by \cite[Lemme 5]{rebolledo}, the cycles $\xi(1,v)^\epsb$, $v \neq 0$, are linear combinations of $\xi(1,2)^\epsb$, $\xi(1,3)^\epsb$ and $\xi(1,-3)^\epsb$. Thus the same is true for $\gamma_{f_\varepsilon}^-$. But we know that $\gamma_{f_\varepsilon}^-$ is a multiple of $\gamma^{-}_{\epsb} = \xi(1,3)^{\epsb}-\xi(1,-3)^{\epsb}$. It is thus enough to compute the coefficient in front of $\xi(1,3)^{\epsb}$, which leads to the identity
\begin{equation*}
\gamma_{f_\varepsilon}^- = \left(12\xi^{+}_{f_{\eps}}(1,2)+(8\zeta_{6}-4) \xi^{+}_{f_{\eps}}(1,3)\right) \cdot \gamma^-_\epsb.
\end{equation*}
Using (\ref{eq gammapsi}) with $\psi=\eps$, we get the proposition.
\end{proof}

Consider the modular units $x=W_{13}(h)$ and $y=W_{13}(H)$.

\begin{pro}\label{pro etaxy}
We have $\int_{\gamma_\epsb^-} \eta(x,y) = \frac{13^2 \sqrt{13}}{48}(1+\zeta_6) \tau(\eps^2) \int_{\gamma_\epsb^-} \eta(u_{\eps^3},u_{\epsb^2})$.
\end{pro}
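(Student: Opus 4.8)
The plan is to rewrite $\eta(x,y)$ in terms of the Beilinson units $u_\chi$ and then isolate the part that survives integration over $\gamma_\epsb^-$. Both $x=W_{13}(h)$ and $y=W_{13}(H)$ are modular units on $Y_1(13)$, and one checks from their cusp expansions that the tame symbols of $\{x,y\}$ all have absolute value $1$; hence $\{x,y\}$ lies in $K_2(X_1(13)(\C))\otimes\C$ and $\int_\gamma\eta(x,y)$ depends only on the class of $\gamma$ in $H_1(X_1(13)(\C),\Z)$, exactly as for the $u_\chi$. Since $\langle -1\rangle$ acts trivially on $X_1(13)$, only even Dirichlet characters occur in the diamond decomposition of modular units, so the first step is to express $x$ and $y$ in $\mathcal{O}^\times(Y_1(13)(\C))\otimes\C$ in terms of the $u_\chi$ with $\chi$ even and nontrivial.

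Concretely I would compute the Fourier expansions at $\infty$ of $x$, $y$ and of each such $u_\chi$, using the explicit formulas for $h$ and $H$ together with the action of $W_{13}$, and then solve the resulting linear system (equivalently, match cuspidal divisors). This writes $x$ and $y$, modulo constants, as $\C$-linear combinations in the logarithms of the $u_\chi$; the constants drop out after passing to $K_2\otimes\C$ and contribute nothing to the integral. By bilinearity and antisymmetry of the symbol, $\eta(x,y)$ then becomes a $\C$-linear combination of the forms $\eta(u_\chi,u_{\chi'})$.

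Next comes the isotypical projection. A direct computation with the defining series shows that $u_\chi$ is a diamond eigenvector, with $\langle d\rangle^* u_\chi=\overline{\chi}(d)\,u_\chi$ in $\mathcal{O}^\times\otimes\C$, so that $\eta(u_\chi,u_{\chi'})$ lies in the $\overline{\chi\chi'}$-eigenspace for $\langle\cdot\rangle^*$. Since $\gamma_\epsb^-\in\mathcal{H}^-(\epsb)$, the integral $\int_{\gamma_\epsb^-}$ annihilates every eigenform except those with $\overline{\chi\chi'}=\epsb$, i.e. $\chi\chi'=\eps$; among even nontrivial characters these come only from the two unordered pairs $\{\eps^3,\epsb^2\}$ and $\{\eps^2,\epsb\}$. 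Writing the surviving part of $\eta(x,y)$ via antisymmetry as
\[
\bigl(a_{\eps^3}b_{\epsb^2}-a_{\epsb^2}b_{\eps^3}\bigr)\,\eta(u_{\eps^3},u_{\epsb^2})+\bigl(a_{\eps^2}b_{\epsb}-a_{\epsb}b_{\eps^2}\bigr)\,\eta(u_{\eps^2},u_{\epsb}),
\]
where the $a_\chi,b_\chi$ are the coefficients found above, I would then verify that the second bracket vanishes, leaving a single term proportional to $\eta(u_{\eps^3},u_{\epsb^2})$. Lemma \ref{lem ceta} guarantees that throughout we may restrict to the $c^*$-anti-invariant part, which is compatible with integrating over $\gamma_\epsb^-$.

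Finally it remains to pin down the scalar. The Gauss sum $\tau(\eps^2)$ enters through the Atkin--Lehner relation between $u_\chi$ and $u_{\overline{\chi}}$, while the powers of $13$, the denominator $48=4\phi(13)$, and the algebraic factor $1+\zeta_6$ emerge from the solved linear system; assembling these gives the stated identity. I expect the principal difficulty to be the exact, rather than merely numerical, determination of the expansions of $x$ and $y$ in the $u_\chi$ --- in particular the control of the Gauss-sum and root-of-unity constants --- together with the verification that the pair $\{\eps^2,\epsb\}$ contributes nothing, so that the right-hand side collapses to the single symbol $\eta(u_{\eps^3},u_{\epsb^2})$ needed in (\ref{formula 1}).
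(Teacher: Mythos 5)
Your overall strategy coincides with the paper's: decompose $x$ and $y$ in $\mathcal{O}^\times(Y_1(13)(\C))\otimes\C$ into the eigen-units $u_\chi$ (the paper does this by matching cuspidal divisors via \cite[Prop 5.4]{brunault:smf}, finding that $\dv(y)$ is proportional to $\dv(u_{\eps^3})$ while $\dv(x)$, being $\langle 5\rangle$-invariant, is a combination of $\dv(u_{\eps^2})$ and $\dv(u_{\epsb^2})$), expand $\eta(x,y)$ by bilinearity, kill the unwanted term by the diamond-character argument, and assemble the constants. Your isotypical condition $\chi\chi'=\eps$ is the right one, and your checkpoint that the pair $\{\eps^2,\epsb\}$ contributes nothing does hold, for the simple reason that $u_\epsb$ occurs in neither decomposition.

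The one step whose justification is wrong as written --- and where the paper does genuine work --- is the disposal of the multiplicative constants. It is not true that constants ``drop out after passing to $K_2\otimes\C$'': the group $\C^\times\otimes\C$ is a huge $\C$-vector space, a symbol $\{c,v\}$ with $c$ constant is in general nonzero, and its regulator $\eta(c,v)=\log|c|\operatorname{darg}(v)$ has nonzero periods (its integral over a loop is $2\pi\log|c|$ times a winding number). So matching divisors, or Fourier expansions up to scalars, does not by itself legitimize replacing $x$ and $y$ by combinations of the $u_\chi$ inside the integral. The paper resolves this by proving exact identities of units, not merely of divisors: it evaluates normalized leading coefficients at $\infty$, namely $\widehat{u_{\eps^3}}(\infty)=\widehat{u_{\epsb^2}}(\infty)=1$, $x(\infty)=1$, $\widehat{y}(\infty)=-1$, which shows that $y\otimes 1$ is exactly proportional to $u_{\eps^3}$ and $x\otimes 1$ is exactly a combination of $u_{\eps^2}$ and $u_{\epsb^2}$, with no constant left over. (Your plan can also be repaired without this: any leftover term $\log|c|\operatorname{darg}(u_{\chi'})$ has diamond character $\chi'\in\{\eps^2,\epsb^2,\eps^3\}$, none of which is $\epsb$, so it too integrates to zero over $\gamma_\epsb^-$ --- but this must be argued; it is not automatic.) A smaller inaccuracy: the Gauss sum $\tau(\eps^2)$ does not arise from any Atkin--Lehner relation (no Atkin--Lehner operator appears in this proof); it enters when the divisor coefficients $L(\chi,2)/\pi^2$ of the $u_\chi$ are converted into algebraic numbers via the classical formula $L(\chi,2)/\pi^2=\frac{\tau(\chi)}{N}\sum_{a}\chib(a)B_2\left(\frac{a}{N}\right)$.
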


\begin{proof}[Proof]
Since $h$ and $H$ are supported in the cusps above $0 \in X_0(13)(\Q)$, it follows that $x$ and $y$ are supported in the cusps above $\infty \in X_0(13)(\Q)$, namely the cusps $\langle d \rangle \infty$, $d \in (\Z/13\Z)^\times/\pm 1$. The method of proof is simple : we decompose the divisors of $x$ and $y$ as linear combinations of Dirichlet characters.

Let us write $\begin{pmatrix} n_1 & n_2 & \cdots & n_6 \end{pmatrix}$ for the divisor $\sum_{d=1}^6 n_d \cdot \langle d \rangle \infty$. By \cite[p. 56]{lecacheux:13}, we have
\begin{align*}
\dv(x) & = \begin{pmatrix} 0 & 1 & 1 & -1 & 0 & -1 \end{pmatrix}\\
\dv(y) & = \begin{pmatrix} 1 & -1 & 1 & 1 & -1 & -1 \end{pmatrix}.
\end{align*}
The divisors of $u_{\eps^3}$ and $u_{\epsb^2}$ are given by \cite[Prop 5.4]{brunault:smf}. We have
\begin{equation*}
\dv (u_{\eps^{3}}) = -\frac{L(\eps^{3},2)}{\pi^{2}} \cdot \begin{pmatrix} 1 & -1 & 1 & 1 & -1 & -1 \end{pmatrix} = -\frac{4 \sqrt{13}}{13^{2}} \dv (y).
\end{equation*}
Since the divisor of $x$ is invariant under the diamond operator $\langle 5 \rangle$, it is a linear combination of $\dv(u_{\eps^2})$ and $\dv(u_{\epsb^2})$. We find explicitly
\begin{align*}
\dv(x) & = \frac{1-2\zeta_{6}}{3} \Bigl(\frac{\dv(u_{\eps^{2}})}{L(\eps^{2},2)/\pi^{2}} - \frac{\dv(u_{\epsb^{2}})}{L(\epsb^{2},2)/\pi^{2}}\Bigr)\\
& = \frac{13}{12} \bigl((2-\zeta_{6}) \tau(\epsb^{2}) \dv(u_{\eps^{2}}) + (1+\zeta_{6})
\tau(\eps^{2}) \dv(u_{\epsb^{2}})\bigr).
\end{align*}
Here we have used the classical formula \cite[(1.80) and (3.87)]{cartier}
\begin{equation*}
\frac{L(\chi,2)}{\pi^2} = \frac{\tau(\chi)}{N} \sum_{a=0}^{N-1} \chib(a) B_2\left(\frac{a}{N}\right)
\end{equation*}
where $\chi$ is an even non-trivial Dirichlet character modulo $N$, and $B_2(x)=x^2-x+\frac16$ is the second Bernoulli polynomial.

Considering $u_{\eps^3}$ and $u_{\epsb^2}$ as elements of $\mathcal{O}^{*}(Y_{1}(13)(\C)) \otimes
\C$ and following the notations of \cite[(65)]{brunault:smf}, we have $\widehat{u_{\eps^3}}(\infty)=\widehat{u_{\epsb^2}}(\infty)=1$ by \cite[Prop. 5.3]{brunault:smf}. Moreover, looking at the behaviour of $x$ and $y$ at $\infty$, we find $x(\infty)=1$ and $\widehat{y}(\infty)=-1$. Hence $x \otimes 1$ can be expressed as a linear combination of $u_{\eps^2}$ and $u_{\epsb^2}$ in $\mathcal{O}^{*}(Y_{1}(13)(\C)) \otimes
\C$, while $y \otimes 1$ is proportional to $u_{\eps^3}$. Thus
\begin{equation*}
\eta(x,y) = -\frac{13^2}{4 \sqrt{13}} \cdot \frac{13}{12} \left((2-\zeta_{6}) \tau(\epsb^{2}) \eta(u_{\eps^{2}},u_{\eps^3}) + (1+\zeta_{6}) \tau(\eps^{2}) \eta(u_{\epsb^{2}},u_{\eps^3})\right).
\end{equation*}
Since the differential form $\eta(u_{\eps^2},u_{\eps^3})$ has character $\eps$, we have $\int_{\gamma_{\epsb}^-} \eta(u_{\eps^2},u_{\eps^3}) = 0$, and the proposition follows.
\end{proof}

\begin{proof}[Proof of Theorem \ref{main thm}]
Combining (\ref{formula 1}) with Propositions \ref{pro Lfeps3}, \ref{pro gammafeps}, \ref{pro etaxy}, we get
\begin{equation}\label{formula 2}
L(f_\eps,2) = \frac{\pi}{\sqrt{13}} \cdot \frac{1-\zeta_6}{\tau(\eps^2)} \int_{\gamma_{\epsb}^-} \eta(x,y).
\end{equation}
Formula (\ref{formula 2}) simplifies if we use the functional equation of $L(f_\eps,s)$. Recall that $W_{13}(f_\eps) = w f_{\epsb}$. Let $\Lambda(f,s):=13^{s/2} (2\pi)^{-s} \Gamma(s) L(f,s)$. Then the functional equation of $L(f_\eps,s)$ reads
\begin{equation*}
\Lambda(f_\eps,s) = -w \Lambda(f_{\epsb},2-s).
\end{equation*}
Using (\ref{eq w}), we deduce that
\begin{equation*}
L(f_\eps,2) = \frac{4\pi^2}{13^2} (4-3\zeta_6) \tau(\eps) L'(f_\epsb,0).
\end{equation*}
Replacing in (\ref{formula 2}) and using $\tau(\eps^2) \tau(\eps) = (4\zeta_6-3) \sqrt{13}$, we get
\begin{equation}\label{formula 3}
\int_{\gamma_\epsb^-} \eta(x,y) = 4\pi (\zeta_6-1) L'(f_\epsb,0).
\end{equation}
Taking complex conjugation, and since $\overline{\eta(x,y)}=\eta(x,y)$, we obtain
\begin{equation}\label{formula 4}
\int_{\gamma_\eps^-} \eta(x,y) = -4\pi \zeta_6 L'(f_\eps,0).
\end{equation}
We have a direct sum decomposition $\mathcal{H}^- \otimes \C = \mathcal{H}^-(\eps) \oplus \mathcal{H}^-(\epsb)$. Write $\gamma_3 = \gamma_3^\eps + \gamma_3^{\epsb}$. Then $\gamma_4 = \langle 2 \rangle_* \gamma_3 = \eps(2) \gamma_3^\eps + \epsb(2) \gamma_3^\epsb$. By Proposition \ref{pro W13gamma0}, we deduce
\begin{equation*}
W_{13} \gamma_P = \gamma_4 - \gamma_3 = (\zeta_6-1) \gamma_3^\eps + (\overline{\zeta_6}-1) \gamma_3^\epsb = (\zeta_6-1) \gamma_\eps^- + (\overline{\zeta_6}-1) \gamma_\epsb^-.
\end{equation*}
By (\ref{formula 3}) and (\ref{formula 4}), we then have
\begin{align*}
\int_{W_{13} \gamma_P} \eta(x,y) & = (\zeta_6-1) \int_{\gamma_\eps^-} \eta(x,y) + (\overline{\zeta_6}-1) \int_{\gamma_\epsb^-} \eta(x,y)\\
& = 4\pi (L'(f_\eps,0) + L'(f_\epsb,0)).
\end{align*}
By Proposition \ref{pro deninger}, we conclude that
\begin{equation*}
m(P)=\frac{1}{2\pi} \int_{\gamma_P} \eta(h,H) = \frac{1}{2\pi} \int_{W_{13} \gamma_P} \eta(x,y) = 2 L'(f,0).
\end{equation*}
\end{proof}

\begin{remark}
There may have been a quicker way to proceed. Starting from Theorem \ref{explicit beilinson} in the particular case $N=13$, probably all we need is a symplectic basis of $H_1(X_1(13)(\C),\Z)$ with respect to the intersection pairing (see the formula \cite[A.2.5]{bost}). But this is less canonical than Theorem \ref{thm reg eta}.
\end{remark}

\begin{remark}
Another way of proving Theorem \ref{main thm} would be to use the main formula of \cite{zudilin}. We have not worked out the details of this computation.
\end{remark}

\begin{question}
Let $g = f | \langle 2 \rangle = \zeta_6 f_\eps+ \overline{\zeta_6} f_\epsb$. Then $(f,g)$ is a basis of the space $S_2(\Gamma_1(13),\Q)$ of cusp forms with rational Fourier coefficients. Is there a polynomial $Q \in \Z[x,y]$ such that $m(Q)$ is proportional to $L'(g,0)$?
\end{question}

\section{Examples in higher level}

We note that the functions $H$ and $h$ used in the proof of Theorem \ref{main thm} are modular units on $X_1(13)$ and that $P$ is their minimal polynomial. There is a similar story for the modular curve $X_1(11)$ \cite[Cor 3.3]{brunault:cras} and we may try to generalize this phenomenon.

Let $N \geq 1$ be an integer, and let $u$ and $v$ be two modular units on $X_1(N)$. Let $P \in \C[x,y]$ be an irreducible polynomial such that $P(u,v)=0$. Then the map $z \mapsto (u(z),v(z))$ is a modular parametrization of the curve $C_P : P(x,y)=0$ and we have a natural map $Y_1(N) \to C_P$. Assuming $P$ satisfies Deninger's conditions, we may express $m(P)$ in terms of the integral of $\eta(u,v)$ over a (non necessarily closed) cycle $\gamma_P$.

The most favourable case is when the curve $C_P$ intersects the torus $T^2 = \{|x|=|y|=1\}$ only at cusps. In this case $\gamma_P$ is a modular symbol and we may use \cite{zudilin} to compute $\int_{\gamma_P} \eta(u,v)$ in terms of special values of $L$-functions.

In this section, we work out this idea for some examples of increasing complexity. We work with the modular units provided by \cite{yang}. These modular units are supported on the cusps above $\infty \in X_0(N)$, so that \cite[Prop 6.1]{brunault:smf} implies that $P$ is automatically tempered.

In all examples below, we found that $\gamma_P$ can be written as the sum of a closed path $\gamma_0$ and a path $\gamma_1$ joining cusps. The integral of $\eta(u,v)$ over $\gamma_1$ can be computed using \cite[Thm 1]{zudilin}. The integral of $\eta(u,v)$ over $\gamma_0$ can be dealt with using either \cite[Thm 1]{zudilin} or the explicit version of Beilinson's theorem -- we have not carried out the details of the computation. So in order to establish the identities below rigorously, it only remains to express $\gamma_0$ in terms of modular symbols and to compute $\int_{\gamma_0} \eta(u,v)$ using the tools explained above.

It would be interesting to understand when the identities obtained involve cusp forms (like (\ref{eq mP16})), are of Dirichlet type (like (\ref{eq mP18})), or of mixed type (like (\ref{eq mP25})). In the general case, it would be also interesting to find conditions on the modular units $u$ and $v$ so that the boundary of $\gamma_P$ consists of cusps or other interesting points.

\subsection{$N=16$}

The modular curve $X_1(16)$ has genus 2 and has been studied in \cite{lecacheux:16}. Let $u$ and $v$ be the following modular units:
\begin{align*}
u & = q \prod_{\substack{n \geq 1\\ n \equiv \pm 1,\pm 5 (16)}} (1-q^n) / \prod_{\substack{n \geq 1 \\ n \equiv \pm 3,\pm 7 (16)}} (1-q^n)\\
v & = q \prod_{\substack{n \geq 1\\ n \equiv \pm 14 (16)}} (1-q^n) / \prod_{\substack{n \geq 1 \\ n \equiv \pm 10(16)}} (1-q^n).
\end{align*}
Their minimal polynomial is given by
\begin{equation*}
P_{16} = y-x-xy-xy^2+x^2y+xy^3.
\end{equation*}
This polynomial vanishes on the torus at the points $(x,y)=(1,1)$, $(1,\pm i)$, $(-1,-1)$, but the Deninger cycle $\gamma_{P_{16}}$ is \emph{closed}. So we may expect that $m(P_{16})$ is equal to $L'(f,0)$ for some cusp form $f$ of level $16$ with rational coefficients. Indeed, we find numerically
\begin{equation}\label{eq mP16}
m(P_{16}) \stackrel{?}{=} L'(f,0)
\end{equation}
where $f$ is the trace of the unique newform of weight $2$ and level $16$, having coefficients in $\Z[i]$.

\subsection{$N=18$}

The modular curve $X_1(18)$ has genus $2$ and has been studied in \cite{lecacheux:18}. It has 3 cusps above $\infty$, so we may form essentially two modular units supported on these cusps. Let $u$ and $v$ be the following modular units:
\begin{align*}
u & = q^3 \prod_{\substack{n \geq 1\\ n \equiv \pm 1,\pm 2 (18)}} (1-q^n) / \prod_{\substack{n \geq 1 \\ n \equiv \pm 7,\pm 8 (18)}} (1-q^n)\\
v & = q^2 \prod_{\substack{n \geq 1\\ n \equiv \pm 1,\pm 4 (18)}} (1-q^n) / \prod_{\substack{n \geq 1 \\ n \equiv \pm 5, \pm 8 (18)}} (1-q^n).
\end{align*}
Their minimal polynomial is given by
\begin{equation*}
P_{18} = -x^2+y^3+xy^2-x^2y+x^2y^2-x^3y^2.
\end{equation*}
This polynomial vanishes on the torus at the points $(x,y)=(1,\pm 1)$, $(-1,\pm 1)$, $(\zeta_6^2,\zeta_6)$ and $(\overline{\zeta_6}^2,\overline{\zeta_6})$ with $\zeta_6=e^{2\pi i/6}$. The points $(\zeta_6^2,\zeta_6)$ and $(\overline{\zeta_6}^2,\overline{\zeta_6})$ correspond respectively to the cusps $\frac16$ and $-\frac16$, and the Deninger cycle $\gamma_{P_{18}}$ is given by $\gamma_0+\{-\frac16,\frac16\}$, where $\gamma_0$ is a closed cycle. Using \cite[Thm 1]{zudilin}, we find
\begin{equation*}
\int_{-1/6}^{1/6} \eta(u,v) = \frac{1}{4\pi} L(F,2)
\end{equation*}
where $F$ is a modular form of weight 2 and level (at most) $18^2$. Actually $F$ has level $18$ and \cite[Thm 1]{zudilin} simplifies if we use the functional equation $L(F,2)=-\frac{2\pi^2}{9} L'(W_{18} F,0)$. In fact \cite[Lemma 2]{zudilin} guarantees that $W_{18} F$ will be a modular form with \emph{integral} Fourier coefficients. In this case, we find
\begin{equation*}
W_{18} F = -36 E_2^{\psi}
\end{equation*}
where $E_2^{\psi} = \sum_{n = 1}^\infty (\sum_{d | n} d) \psi(n) q^n$ is an Eisenstein series of level $9$, and $\psi : (\Z/3\Z)^\times \to \{\pm 1\}$ is the unique Dirichlet character of conductor $3$. Since $L(E_2^{\psi},s)=L(\psi,s) L(\psi,s-1)$, we may expect that $m(P_{18})$ involves $L$-values of Dirichlet characters. Indeed, we find numerically
\begin{equation}\label{eq mP18}
m(P_{18}) \stackrel{?}{=} 2 L'(\psi,-1).
\end{equation}

\subsection{$N=25$}

The modular curve $X_1(25)$ has genus 12 and the quotient $X=X_1(25)/\langle 7 \rangle$ has genus $4$. The curve $X$ and its modular units have been studied by Lecacheux \cite{lecacheux:25} and Darmon \cite{darmon:X1_25}. Consider the following modular units:
\begin{align*}
u & = q \prod_{\substack{n \geq 1\\ n \equiv \pm 3,\pm 4 (25)}} (1-q^n) / \prod_{\substack{n \geq 1 \\ n \equiv \pm 2,\pm 11 (25)}} (1-q^n)\\
v & = q^{-1} \prod_{\substack{n \geq 1\\ n \equiv \pm 9,\pm 12 (25)}} (1-q^n) / \prod_{\substack{n \geq 1 \\ n \equiv \pm 6,\pm 8 (25)}} (1-q^n).
\end{align*}
Their minimal polynomial is given by
\begin{equation*}
P_{25}=y^2 x^4 + (y^3 + y^2) x^3 + (3y^3 - y^2 - 2y)x^2 + (y^4 - 4y^2 + y - 1)x - y^3.
\end{equation*}
This polynomial vanishes on the torus at the points $(x,y)=(\zeta,-\zeta)$ for each primitive $5$-th root of unity $\zeta$. These points are cusps: letting $\zeta_5=e^{2\pi i/5}$, we have
\begin{align*}
u(1/5) & =\zeta_5^2=-v(1/5) & u(-1/5) & =\zeta_5^{-2} = -v(-1/5)\\
u(2/5) & =\zeta_5=-v(2/5) & u(-2/5) & =\zeta_5^{-1} = -v(-2/5).
\end{align*}
The Deninger cycle associated to $P_{25}$ is given by $\gamma_{P_{25}}=\gamma_0+\gamma_1$ where $\gamma_0$ is a closed cycle and $\gamma_1 = \left\{\frac15,-\frac15\right\}+ \left\{-\frac25,\frac25\right\}$. Using \cite[Thm 1]{zudilin}, we get
\begin{equation*}
\int_{\gamma_1} \eta(u,v) = \frac{1}{4\pi} L(F,2)
\end{equation*}
where $F$ is a modular form of weight 2 and level 25. This time $F$ is a linear combination of newforms and Eisenstein series. Let $\eps : (\Z/25\Z)^\times \to \C^\times$ be the unique Dirichlet character such that $\eps(2)=\zeta_5$. A basis of eigenforms of $\Omega^1(X) \otimes \C$ is given by newforms $(f_a)_{a \in (\Z/5\Z)^\times}$ having Fourier coefficients in $\Q(\zeta_5)$ and forming a single Galois orbit. The newform $f_a$ has character $\eps^a$ and for any $\sigma \in \Gal(\Q(\zeta_5)/\Q)$, we have $\sigma(f_a) = f_{\chi(\sigma) a}$ where $\chi$ is the cyclotomic character. Moreover, let $\psi : (\Z/5\Z)^\times \to \C^\times$ be the Dirichlet character defined by $\psi(2)=i$. Then $W_{25} F$ has integral coefficients and is given by
\begin{equation*}
W_{25} F = -10 \operatorname{Tr}_{\Q(\zeta_5)/\Q} (\lambda f_1) - 25(1+i) E_2^{\psi,\overline{\psi}} - 25 (1-i) E_2^{\overline{\psi},\psi}
\end{equation*}
where $\lambda = 2\zeta_5+\zeta_5^{-1}+2\zeta_5^{-2}$ and $E_2^{\psi,\overline{\psi}}$ is the Eisenstein series defined by
\begin{equation*}
E_2^{\psi,\overline{\psi}} = \sum_{m,n=1}^\infty m \overline{\psi}(m) \psi(n) q^{mn}.
\end{equation*}
We may therefore expect $m(P_{25})$ being a linear combination of $L'(\psi,-1)$, $L'(\overline{\psi},-1)$ and $L'(f,0)$, where $f$ is a cusp form with rational Fourier coefficients. Indeed, we find numerically
\begin{equation}\label{eq mP25}
m(P_{25}) \stackrel{?}{=} L'(f,0) + \frac{1+2i}{5} L'(\overline{\psi},-1) + \frac{1-2i}{5} L'(\psi,-1)
\end{equation}
where
\begin{equation*}
f =\frac15 \operatorname{Tr}((2+\zeta_5+2\zeta_5^{-2}) f_1) = q + q^2-q^3-q^4-3q^5-2q^9+3q^{10}+4q^{11}+O(q^{12}).
\end{equation*}

\bibliographystyle{smfplain}   
\bibliography{references}

\end{document}